\title{Branching laws for discrete Wallach points}
\author{St\'ephane Merigon and Henrik Sepp\"anen}
\thanks{The first author was supported by the fellowship ``Bourse Lavoisier'' 
from the French Foreign Office. The second author was supported by a Post Doctoral Fellowship 
from the Swedish Research Council.}
\keywords{Lie group, holomorphic discrete series, branching law, symmetric tube domains, Jordan algebras, spherical functions, Plancherel theorem}
\subjclass[2000]{22E45, 32M15, 32D15, 46F10}
\date{\today}
\address{St\'ephane Merigon, Fachbereich Mathematik, AG AGF
Technische Universit\"at Darmstadt
Schlo{\ss}gartenstra{\ss}e 7
64289 Darmstadt }
\email{merigon@mathematik.tu-darmstadt.de}
\address{Henrik Sepp\"{a}nen, Fachbereich Mathematik, AG AGF
Technische Universit\"at Darmstadt
Schlo{\ss}gartenstra{\ss}e 7
64289 Darmstadt}
\email{seppaenen@mathematik.tu-darmstadt.de}
\newcommand{\C}{\mathbb{C}}
\newcommand{\R}{\mathbb{R}}
\newcommand{\N}{\mathbb{N}}
\newcommand{\D}{\mathcal{D}}
\newcommand{\uD}{\underline{\mathcal{D}}}
\newcommand{\ch}{\mathcal{H}}
\newcommand{\fa}{\mathfrak{a}}
\newcommand{\fm}{\mathfrak{m}}
\newcommand{\fn}{\mathfrak{n}}
\newcommand{\fg}{\mathfrak{g}}
\newcommand{\fq}{\mathfrak{q}}
\newcommand{\fp}{\mathfrak{p}}
\newcommand{\fk}{\mathfrak{k}}
\newcommand{\fz}{\mathfrak{z}}
\newcommand{\ph}{\varphi}
\newcommand{\ov}{\overline}
\newcommand{\uG}{\underline{G}}
\newcommand{\ti}{\widetilde}
\newcommand{\ra}{\rightarrow}
\newcommand{\mt}{\mapsto}
\newcommand{\nm}[1]{\left|#1\right|}
\newcommand{\spl}[1]{\langle #1 \rangle_{L^2(\pol)}}
\newcommand{\pol}{\partial_l\Omega}
\newcommand{\cpol}{C_0^\infty(\partial_l\Omega)}
\newcommand{\cg}{C_0^\infty(G)^\#}
\newcommand{\ol}{\Omega^{(l)}}
\newcommand{\Pl}{\Phi^{(l)}}
\newcommand{\dxl}{d_*^{(l)}\!x}
\newcommand{\Ol}{\Omega^{(l)}}
\newcommand{\gnlp}{\gamma^{(l)}_{\bsn'}}
\newcommand{\hf}{\frac12}
\newcommand{\dmul}{d\mu_l}
\newcommand{\bsn}{{\boldsymbol\nu}}
\newcommand{\bsl}{{\boldsymbol\lambda}}
\newcommand{\bsr}{{\boldsymbol\rho}}
\newcommand{\bsrl}{{\boldsymbol\rho}^{(l)}}
\newcommand{\Dnp}{\Delta_{\bsn'}}
\newcommand{\Dl}{\Delta_{(l)}}
\newcommand{\Dn}{\Delta_{\bsn}}
\DeclareMathOperator{\tr}{tr}
\DeclareMathOperator{\Aut}{Aut}
\DeclareMathOperator{\Ad}{Ad}
\DeclareMathOperator{\Ind}{Ind}
\DeclareMathOperator{\id}{id}
\DeclareMathOperator{\supp}{supp}
\newtheorem{pro}{Proposition}[section]
\newtheorem{lem}[pro]{Lemma}
\newtheorem{theo}[pro]{Theorem}
\theoremstyle{definition}
\newtheorem{defin}[pro]{Definition}
\newtheorem{rem}[pro]{Remark}
\begin{document}
 
\maketitle

\begin{abstract}
We consider the (projective) representations of the
group of holomorphic automorphisms of a symmetric tube 
domain $V\oplus i\Omega$ that are obtained
by analytic continuation of the holomorphic discrete
series. For a representation corresponding to a discrete
point in the Wallach set, we find the decomposition 
under restriction to the identity component of $GL(\Omega)$. Using Riesz distributions, 
an explicit intertwining operator is constructed as an analytic 
continuation of an integral operator. The density for the
Plancherel measure involves quotients of $\Gamma$-functions and 
the $c$-function for a symmetric cone of smaller rank.
\end{abstract}

\section{Introduction}

Let $\uG$ be the identity component of the group of biholomorphisms of a 
irreducible bounded symmetric domain $\uD$.
The scalar holomorphic discrete series of $\uG$ can be realised in the space of 
holomorphic functions on this domain. By reproducing kernel techniques, 
M. Vergne and H. Rossi \cite{VERO} have shown (see also \cite{BER1,WAL,FAKO}) that it has an analytic continuation as a 
family of (projective) irreducible unitary representations $\pi_\alpha$ of $\uG$, 
parametrised 
by the so-called Wallach set.
Let $r$ be the rank of the domain and $d$ its characteristic number (cf. next section 
for a definition). Then the Wallach set is the union of the half-line 
$\alpha>(r-1)\frac{d}{2}$ and a discrete part consisting of $r$ points 
$l\frac{d}{2}$, $l=0,\ldots, r-1$. 
When $\alpha>p-1$, where $p$ is the genus of $\uD$, the representation spaces are 
weighted Bergman spaces.

Let $\tau$ be an antilinear involution of $\uD$. Then $\D:=\uD^\tau$ is a totally 
geodesic submanifold, hence a Riemannian symmetric space, and $G:=\uG^\tau$ contains 
its group of displacements. Such a domain is called a real bounded symmetric domain. 

When one restricts an irreducible unitary representation of a group to a subgroup, 
the representation need not to be irreducible anymore, and the decomposition into irreducibles is
called a branching law. In our context two branching problems 
have been extensively studied: the 
decomposition of the tensor product representation $\pi_\alpha\widehat\otimes\ov\pi_\alpha$ 
and the restriction of $\pi_\alpha$ to symmetric subgroups 
$G=\uG^\tau$ where $\tau$ is an antilinear involution of $\D$. A formula for the 
first problem and for $\alpha>p-1$ was given without
proof by Berezin for classical domains in \cite{BER2}. H. Upmeier and A. Unterberger 
extended it to all domains and gave a Jordan theoretic proof \cite{UPUN}. 
The second problem was solved (for the same parameters) by G. Zhang and 
(independently) by G. van Dijk and M. Pevzner \cite{ZH3,VDPE}, and also by Y. Neretin 
for classical groups \cite{NE1}. 
Those two problems are in fact similar. The restriction map from $\uD$ to $\D$ (resp. from $\uD\times\uD$ to $\uD$)
gives rise to the Berezin transform on $\D$ (resp. $\uD$), which is a kernel operator. The solution then consists in 
computing the spectral symbol of the Berezin transform, or, if one prefers, in computing the Fourier transform of the 
Berezin kernel. 
In \cite{ZH1} and in \cite[Section 5]{VDPE} the problem of decomposing $\pi_\alpha\widehat\otimes\ov\pi_{\alpha+l}$ where $l\in\N$ is also solved, by the same method. A similar problem is also studied in \cite{FAPE}.

For arbitrary parameters, those problems are more complicated, and no 
general method seems to apply. In \cite{ZHO} the tensor product problem 
for $\uG=SU(2,2)$ is solved for any parameter. In \cite{ZH2} the representation 
$\pi_{\frac{d}{2}}\widehat\otimes\ov\pi_{\frac{d}{2}}$ is decomposed for any 
$\uG$ ($\pi_{\frac{d}{2}}$ is called the minimal representation).
In \cite{NE3}, Y. Neretin solves the restriction problem from $U(r,s)$ to 
$O(r,s)$ ($r\leq s$) for any parameter by analytic continuation of the result for large 
parameters. If $r=s$ the support of the Plancherel formula remains the same for all 
$\alpha>r-1$ (here $d=2$) but when $s-r$ is sufficiently large new pieces appear
when $\alpha$ crosses $p-1=2(r+s)-1$ and the situation gets worse as $\alpha$ approaches 
to $(r-1)\frac{d}{2}$, as he had already explained in \cite{NE2}. For points in the 
discrete Wallach set, the situation is not clear.
In his thesis the second author
manages to decompose the restriction of $SO(2,n)$ to $SO(1,n)$ for any parameter 
\cite{SEP2}, as well as the restriction of the 
minimal representation of $SU(p,q)$ to $SO(p,q)$ \cite{SEP3}, and the minimal 
representation of $Sp(n,\R)$ (resp. $SU(n,n)$) to $GL^+(n,\R)$ (resp. to 
$GL(n,\C)$) \cite{SEP1}.

Assume that $\D$ is of tube type, i.e. that $\D$ is biholomorphic to the tube 
domain $T_\Omega$ over the symmetric cone $\Omega$. Then 
the inverse image of $\Omega$ is a real bounded symmetric domain. In this paper, generalising \cite{SEP1}, we 
establish, for any parameter in the discrete Wallach set,
the branching rule for the restriction of the associated representation of 
$\uG=G_0(T_\Omega)$ to $G=GL_0(\Omega)$. 

We use the model by Rossi and Vergne which realises the representation
given by the $l$-th point in the Wallach set as $L^2(\pol, \mu_l)$, where
$\pol$ is the set of positive semidefinite elements in $\partial \Omega$ 
of rank $l$, and $\mu_l$ is a relatively $G$-invariant measure on $\pol$. 
A key observation is that for any $x$ in $\pol$, the function
$g \mapsto \Delta_\bsn(g^*x)$ on $G$, where $\Delta_\bsn$ is the power function
of the Jordan algebra, transforms like a function in a certain parabolically induced
representation. A naive approach to construct an intertwining operator 
from $L^2(\pol, \mu_l)$ into a direct sum of parabolically 
induced representations would then to weight the functions above by 
compactly supported smooth functions, i.e., to consider mappings
$f \mapsto \int_{\pol}f(x) \Delta_\bsn(g^*x)d\mu_l(x)$, for 
$f$ in $\cpol$. 
It will become clear that this approach is in fact fruitful. However, 
there are two problems that have to be dealt with. First of all, it is
not obvious that the natural target spaces are unitarisable.
Secondly, and more importantly, the integrals above need not converge
for the suitable choice of parameters $\bsn$. However, as we shall see, 
both these problems can be 
solved.

The paper is organised as follows. In Section 2 we recall some facts 
about Jordan algebras and symmetric cones that will be needed in the paper. 
In Section~3 we prove an identity between the restriction of a spherical 
function for the cone $\Omega$ to a cone of lower rank in its boundary and the 
corresponding spherical function for the lower rank 
cone. In Section~4 we define a class of irreducible unitary spherical representations 
that provides target spaces for the integral operators discussed above. These are 
constructed using the Levi decomposition of the group $G$ by twisting parabolically 
induced unitary representations for the semisimple factor of $G$ by a 
certain character. In Section~5 we construct the intertwining operator as an 
analytic continuation of the integral operator above. After this has been taken care of, 
a polar decomposition for the measure $\mu_l$ due to J. Arazy and H. Upmeier \cite{ARUP}
allows to express the restriction 
of the intertwining operator to $K$-invariant vectors in terms of the
Fourier transform for a cone of rank $l$. Using this 
identification, the inversion formula for the Fourier transform
can be used to prove the Plancherel theorem for the branching
problem. In the appendix we provide a framework for certain
restrictions of distributions to submanifolds which will be useful for
giving an analytic continuation for the integral that should give
an intertwining operator. It should be pointed out that the standard
theory for restricting distributions (e.g. \cite[Cor.~8.2.7.]{HOR}) 
does not apply to our situation since the condition
on the wave front set for the distribution is not satisfied. Instead
we have to use restrictions based on extending test
functions in such a way that they are constant in certain
directions from the submanifold (cf. Appendix~\ref{A1}).

We finally  want to mention that branching 
problems related to holomorphic involutions of $\uD$ have also been studied in
\cite{REP,KOB,BSA,PEZH}.

\bigskip

\noindent {\bf Acknowledgement}. The authors would like to thank Karl-Hermann Neeb for enlightening
discussions and comments that led to substantial improvement of the presentation.

\section{Jordan theoretic preliminaries}

Let $V$ be a Euclidean Jordan algebra. It is a commutative real algebra 
with unit element $e$ such that the multiplication operator $L(x)$ satisfies 
$[L(x),L(x^2)]=0$, and provided with a scalar product for which $L(x)$ is 
symmetric. An element is invertible if its quadratic representation 
$P(x)=2L(x)^2-L(x^2)$ is so.
Its cone of invertible squares $\Omega$ is a symmetric cone: it is 
homogeneous under the identity 
component, $G$, of the Lie group 
$GL(\Omega)=\{g\in GL(V) \mid g\Omega=\Omega\}$, and 
it is self-dual. 
It follows that the involution 
$\Theta(g):=g^{-*}:=(g^*)^{-1}$ (where $g^*$ is the adjoint of $g$ with respect 
to the scalar product of $V$) preserves $G$ (which is hence reductive). 
The stabiliser $K=G_e$ of $e$ 
coincides with the identity component of the group $\Aut(V)$ of automorphisms of $V$ and 
with the fixed points of $G$ under the involution $\Theta$, and hence is compact. 
Thus $\Omega$ is a a Riemannian symmetric space.

The tube $T_\Omega=V\oplus i\Omega$ over $\Omega$ in the 
complexification of $V$ is an Hermitian symmetric 
space of the non-compact type, diffeomorphic via the Cayley 
transform to a (tube type) bounded symmetric domain.
Any element of $GL(\Omega)$, when extended complex-linearly, preserves 
$T_\Omega$. In this fashion $G$ is seen as a subgroup of the identity component 
$\uG$ of the group of biholomorphisms of $T_\Omega$.

We assume that $V$ is simple. Then there exists a 
positive integer $r$, called the rank of $V$, such 
that any family of 
mutually orthogonal minimal idempotents has $r$ elements. 
Such a family is called a Jordan frame. Let
$$\fg=\fk\oplus\fp$$ 
be the Cartan decomposition of the Lie algebra $\fg$ of $G$. 
Then the map $x\mt L(x)$ yields an isomorphism $V\ra\fp$.
The subspace generated by a Jordan frame (more precisely by the associated 
multiplication operators) is a maximal abelian subspace of $\fp$ and conversely, 
any maximal abelian subspace of $\fp$ determines (up to order) a 
Jordan frame. From 
now we fix a choice of a Jordan frame $(c_1,\dots,c_r)$ and let
$$\fa=\langle L(c_j),j=1,\dots, r\rangle$$ 
and $A=\exp{\fa}$. 

Any $x$ in $V$ can be written 
\begin{equation*}
x=k\sum_{1\leq j\leq 
r}{\lambda_jc_j}
\end{equation*}
where $k\in K$ and the $\lambda_j$ are real 
numbers, and the family $(\lambda_1,\dots,\lambda_r)$ is
unique up to permutation (its members are called the eigenvalues of $x$).
Then $x$ belongs to $\Omega$ if and only if for all 
$1\leq j\leq r$, $\lambda_j>0$, and this spectral decomposition corresponds
the $KAK$ decomposition of $G$, the $A$-component in the decomposition
being unique up to conjugation by an element of the Weyl group $W=\mathfrak{S}_r$ of $G$. 
The rank of $x$ is defined 
to be the number of its nonzero eigenvalues. 
There exists on $V$ a $K$-invariant polynomial function $\Delta(x)$ (the determinant) and a $K$-invariant linear 
function $\tr(x)$ (the trace) that satisfy
$$\Delta(x)=\prod_{j=1}^r\lambda_j\quad\text{and}\quad\tr(x)=\sum_{j=1}^r\lambda_j.$$
The determinant defines the character
\begin{equation}\label{D:char}
\Delta(g):=\Delta(ge) 
\end{equation}
of the group $G$.

A Jordan frame gives rise to the important Peirce decomposition.
Since multiplications by orthogonal idempotents commute, 
the space $V$ decomposes into a direct sum of
joint eigenspaces for the (symmetric) operators $(L(c_j))_{j=1,\dots, r}$. 
The eigenvalues of $L(c)$ when $c$ is an 
idempotent, belong to $\{0,\hf,1\}$. Let us denote by 
$V(c,\alpha)$ the eigenspace corresponding to the value $\alpha$. The
decomposition into joint eigenspaces is then given by
$$V=\bigoplus_{1\leq i\leq j\leq r}V_{ij},$$
where
$$V_{ii}=V(c_i,1)\cap\bigcap_{j\neq i}V(c_j,0),$$
and when $i\neq j$,
$$V_{ij}=V(c_i,\tfrac12)\cap V(c_j,\tfrac12)\cap\bigcap_{k\not\in
\{i,j\}}V(c_j,0).$$
We have $V_{ii}=\R c_i$ and the $V_{ij}$ all have 
the same dimension $d$, called the degree of the Jordan algebra.

We can now describe the roots of $(\fg,\fa)$. 
Let $(\delta_j)_{j=1,\dots, r}$ be the dual basis of 
$(L(c_j))_{j=1,\dots, r}$ in $\fa^*$. Then the roots  
are
$$\alpha^{\pm}_{ij}=\pm \frac{\delta_j-\delta_i}{2},\quad 
1\leq i<j\leq r,$$
and the corresponding root spaces are
\begin{gather*}
\fg^{+}_{ij}=\{a\square e_i\mid a\in V_{ij}\},\\
\fg^{-}_{ij}=\{a\square e_j\mid a\in V_{ij}\}.
\end{gather*}
where $x\square y=L(L(x)y)+[L(x),L(y)]$.
Let $N$ be the nilpotent subgroup
$$N=\exp{\bigoplus_{1\leq i<j \geq r}\fg^+_{ij}}.$$ 
Then $G$ has the Iwasawa decomposition $G=NAK$.

For any idempotent $c$, the projection on $V(1,c)$ is $P(c)$, and $V(1,c)$ is a 
Jordan subalgebra, hence a Euclidean Jordan algebra with neutral element 
$c$ (note that it is simple with rank the one of $c$). We denote by 
$\Omega_1(c)$ its symmetric cone. In particular for
$$e_j=\sum_{k=1}^l{c_k}.$$
we set 
$$V^{(l)}=V(1,e_l) \quad \text{and}\quad\Omega^{(l)}=\Omega_1(e_l),$$
and also note $G^{(l)}$ the identity component of $G(\Ol)$, $K^{(l)}=G_{e_l}$ and 
$\Delta^{(l)}$ the determinant of $V^{(l)}$.
The principal minors of $V$ are then defined by the formula
$$\Delta_{(j)}(x):=\Delta^{(j)}(P(e_j)(x)).$$
Then $x$ is in $\Omega$ if and only if for all $1\leq j\leq r$, $\Delta_{(j)}(x)>0$.
Let $\bsn\in\C^r$ and set for $x$ in $\Omega$, 
$$\Dn(x)=\Delta_{(1)}^{\nu_1-\nu_2}(x)\Delta_{(2)}^{\nu_2-\nu_3}(x)\dots 
\Delta_{(r-1)}^{\nu_{r-1}-\nu_r}(x)\Delta_{(r)}^{\nu_r}(x).$$
Using the basis $(\delta_j)$, we can identify $\bsn$ with an element of $a_\C^*$.
Then if $a(g)$ is the projection of $g$ on $A$ in the Iwasawa decomposition, 
\begin{equation}\label{E:hcef}
\Dn(gx)=e^{\bsn\log{a(g)}}\Dn(x).
\end{equation}

The action of $G$ on the boundary $\partial \Omega$ of $\Omega$ has $r-1$ 
orbits, which may be parametrised by the rank of its elements. We denote by 
$\pol$ the orbit of rank $l$ elements, i.e.,
$$\pol=Ge_l.$$  
There exists on $\pol$ a unique relatively $G$-invariant 
measure $\mu_l$, which transforms according to
$$d\mu_l(gx)=\Delta^{\frac{ld}{2}}(g)d\mu_l(x).$$
The Hilbert space associated to the Wallach point $l\frac{d}{2}$ is, up to renormalisation, 
isometric to $L^2(\pol,\mu_l)$ \cite[Theorem X.III.4]{FAKO}, and
the representation of $G$ in this picture is then given by 
\begin{equation}
\pi^l(g)f=\Delta^{\frac{ld}{4}}(g)f(g^*\cdot).
\end{equation} 
The measures $\mu_l$ were constructed by M. Lassalle \cite{LAS} and can also be obtain as Riesz 
distribution, thanks to S. Gindikin's theorem \cite[VII.3]{FAKO}.
A major tool for our purpose will be 
the polar decomposition of $\mu_l$ \cite[Theorem 3.2.6]{ARUP}.
Let $\Pi_l=K.e_l$ be the set of idempotents of rank $l$. Then $\pol$ is the disjoint union
$$\pol=\bigsqcup_{u\in\Pi_l}\Omega(u).$$
Since elements of $G$ permute the faces of $\ov{\Omega}$ (which are of the form $\ov{\Omega(u)}$ for idempotents $u$), an action 
is induced on $\Pi_l$, such that the preceding equality defines a $G$-equivariant fibration
$$\pol\ra\Pi_l.$$
For any function $f$ in the space $\cpol$ of smooth functions with compact support on $\pol$,
\begin{equation}\label{E:pd}
\int_{\pol}{fd\mu_l}=\int_{K}{dk\int_{\Ol}{\Dl^{\frac{rd}{2}}(x)f(kx)d_*^{(l)}x}}, 
\end{equation}
where $d_*^{(l)}x$ is the unique $G^{(l)}$-invariant measure on $\Ol$.

The set $\pol$ is not a submanifold of $V$. However
let $V_{\geq l}$ be the (open) set of elements in $V$ with rank bigger or equal than $l$.
To any $l$-element subset $I_l\subset\{1,\dots, r\}$ one can associate the idempotent 
$e_{I_l}=\sum_{j\in I_l}{c_j}$ and the minor 
$\Delta_{I_l}(x):=\Delta(P(e_{I_l})x+e-e_{I_l})$. Then
\begin{equation*}
V_{\geq l}=\bigcup_{I_l\subset\{1,\dots, r\}}\{x\in V\mid \Delta_{I_l}(x)\neq 0\},
\end{equation*}
and \cite[Propositions 3 and 7]{LAS} show that $\pol$ is a (closed) submanifold of $V_{\geq l}$.

\section{An identity between spherical functions}

The spherical functions on $\Omega$ may be defined for $\bsn$ in $\fa_\C^*$ by the formula 
$$\Phi_\bsn(x)=\int_{K}{\Dn(kx)dk}.$$ 
When $\bsn$ satisfies $\Re\nu_1\geq\dots\geq\Re\nu_r\geq0$, a property that we 
will denote by $\Re\bsn\geq0$, the generalised power function
$\Delta_{\bsn}$ and the spherical function $\Phi_\bsn$ extend continuously to $\ov\Omega$.
Now let 
\begin{equation}\label{D:al}
\fa_l=\langle L(c_j),j=1,\dots, l\rangle
\end{equation}
for $1\leq l\leq r-1$ and assume that $\bsn$ belongs to 
$({\fa_l}_\C)^*$,
i.e. that its $(r-l)$ last coordinates vanish. Then $\bsn$ also defines a spherical function 
$\Phi^{(l)}_\bsn$ of $\Omega^{(l)}$. 
Let $\alpha$ be a real number. When $\bsn$ appears in the argument of an object related to $V^{(l)}$,
we will  use the convention that $\bsn+\alpha:=(\nu_1+\alpha,\dots,\nu_l+\alpha)$. Recall that $\Ol\subset\pol$. 
\begin{theo}\label{T:En}
Let $\bsn$ in ${\fa_l}_\C^*$ such that $\Re\bsn\geq0$. Then for all $x$ in $\Ol$,
$$\Phi_{\bsn}(x)=\gamma_{\bsn}^{(l)}\Phi^{(l)}_{\bsn}(x), 
\quad\text{where}\quad  \gamma_{\bsn}^{(l)}=
\frac{\Gamma_{\Ol}(\frac{rd}{2})\Gamma_{\Ol}(\bsn+\frac{ld}{2})}
{\Gamma_{\Ol}(\frac{ld}{2})\Gamma_{\Ol}(\bsn+\frac{rd}{2})}.$$ 
\end{theo}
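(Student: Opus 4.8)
The plan is to reduce the identity to a known integral formula for the Gindikin gamma function by integrating the spherical function against a suitable test function on the cone $\Ol$. Specifically, I would pair both sides of the claimed identity with the density $\Dl^{\bsr}(x)e^{-\tr(x)}$ (for a generic spectral parameter $\bsr\in{\fa_l}_\C^*$), integrate over $\Ol$ against the invariant measure $\dxl$, and show that the two resulting integrals agree. The right-hand side is, essentially by definition, a product of two Gindikin gamma functions of the cone $\Ol$ (via the classical formula $\int_{\Ol}\Delta^{(l)}_{\bsl}(x)e^{-\tr x}\,\dxl = \Gamma_{\Ol}(\bsl)$, suitably normalised), so the crux is to identify the integral of $\Phi_{\bsn}(x)\Dl^{\bsr}(x)e^{-\tr x}$ over $\Ol$ with a corresponding quotient.

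First I would unfold the definition $\Phi_\bsn(x)=\int_K \Dn(kx)\,dk$ and use the polar decomposition of the measure $\mu_l$ from \eqref{E:pd} in reverse: the integral $\int_K dk\int_{\Ol}\Dn(kx)\Dl^{\bsr-\frac{rd}{2}}(x)e^{-\tr x}\,\dxl$ can be rewritten, because $\tr$ and the exponential are $K$-invariant and because $\Dl$ restricted to $\Ol$ behaves appropriately, as an integral over $\pol$ of $\Dn(y)$ against $e^{-\tr y}$ times a power of a minor, against $d\mu_l(y)$. That is, the $K$-average of $\Dn$ over the fibre together with the weight $\Dl^{rd/2}$ is exactly what reassembles $\mu_l$. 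I would then recognise $\int_{\pol}\Dn(y)e^{-\tr y}\,d\mu_l(y)$ as a Riesz-type integral over the rank-$l$ boundary orbit, which by Gindikin's theorem (the realisation of $\mu_l$ as a Riesz distribution, cf. \cite[VII.3]{FAKO}) evaluates to a ratio of $\Gamma_\Omega$-functions; restricting to $\bsn\in{\fa_l}_\C^*$ collapses these full-rank gamma functions to the rank-$l$ ones appearing in $\gamma^{(l)}_\bsn$, producing precisely the factor $\frac{\Gamma_{\Ol}(\frac{rd}{2})\Gamma_{\Ol}(\bsn+\frac{ld}{2})}{\Gamma_{\Ol}(\frac{ld}{2})\Gamma_{\Ol}(\bsn+\frac{rd}{2})}$ against $\Gamma_{\Ol}(\bsr+\bsn)$ or similar. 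Comparing with the expansion of the right-hand side $\gamma^{(l)}_\bsn\int_{\Ol}\Phi^{(l)}_\bsn(x)\Dl^{\bsr-rd/2}(x)e^{-\tr x}\,\dxl = \gamma^{(l)}_\bsn\Gamma_{\Ol}(\tfrac{rd}{2})^{-1}\cdot(\text{gamma quotient})$ then forces the pointwise identity, since both sides are $K$-invariant analytic functions on $\Ol$ determined by all such integral pairings (the functions $\Dl^{\bsr}e^{-\tr}$ separate $K$-invariant functions on $\Ol$ as $\bsr$ varies).

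The main obstacle I anticipate is the bookkeeping of normalisations and the domain of validity: the polar decomposition \eqref{E:pd} and the Riesz-distribution evaluation are only literally valid for parameters in certain half-spaces, so one must first establish the identity for $\Re\bsn$ and $\Re\bsr$ large (where all integrals converge absolutely and Fubini applies), carefully track the factor $\Dl^{rd/2}$ coming out of \eqref{E:pd} together with the reduction of $\Gamma_\Omega$ to $\Gamma_{\Ol}$ when the last $r-l$ coordinates of the parameter vanish, and only then invoke analytic continuation in $\bsn$ (both sides being holomorphic in $\bsn\in{\fa_l}_\C^*$, with the stated continuity up to $\Re\bsn\geq 0$ guaranteed by the continuity of $\Delta_\bsn$ and $\Phi_\bsn$ on $\ov\Omega$ recalled above). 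A secondary technical point is justifying that $\Dl(x)$, $x\in\Ol$, is the right quantity to reconstruct $\mu_l$ — this is exactly the content of \eqref{E:pd}, so it is available, but one must check that the minor $\Delta_{(l)}$ appearing there and the determinant $\Delta^{(l)}$ of $V^{(l)}$ are compatibly normalised so that no spurious constant survives. Once the large-parameter identity is pinned down with correct constants, the rest is a routine continuation argument.
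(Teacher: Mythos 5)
The paper's proof is entirely different from yours: it cites Arazy--Upmeier for the case $\bsn\in\N^l$ and then extends to $\Re\bsn\geq 0$ by a Carlson/Blaschke-type uniqueness theorem (a bounded holomorphic function on the right half-plane vanishing on $\N$ vanishes identically), applied iteratively in the variables $z_j=\nu_j-\nu_{j+1}$. No integral of the spherical function is ever computed. Your proposal is a genuinely different route, but it contains a gap at its central step.

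The problem is the "polar decomposition in reverse." You want to rewrite
$\int_K dk\int_{\Ol}\Delta_{\bsn}(kx)\,\Delta^{(l)}_{\bsr}(x)\,e^{-\tr x}\,\dxl$
as a $\pol$-integral against $d\mu_l$ via \eqref{E:pd}. For this, the factor $\Delta^{(l)}_{\bsr}(x)/\Delta_{(l)}^{rd/2}(x)$ would have to be expressible as a function of $y=kx$ alone, i.e.\ to descend through the map $K\times\Ol\to\pol$. That fails for a genuine vector parameter $\bsr\in({\fa_l}_\C)^*$, because $\Delta^{(l)}_{\bsr}$ is not $K^{(l)}$-invariant and the fibres of $(k,x)\mapsto kx$ include the $K^{(l)}$-orbit of $x$. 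If you restrict to scalar powers $\Delta_{(l)}^s$ the rewrite does go through, but a one-parameter family $\{\Delta_{(l)}^s e^{-\tr}\}_s$ does not separate $K^{(l)}$-invariant functions on $\Ol$, so the pointwise conclusion cannot be drawn. The natural fix --- $K^{(l)}$-symmetrising the test density to $\Phi^{(l)}_{\bsr}e^{-\tr}$ --- makes the $K$-integral in \eqref{E:pd} trivial (both sides are already $K$-invariant), and the resulting $\pol$-integrand involves the nonstandard pullback of $\Phi^{(l)}_{\bsr}/\Delta_{(l)}^{rd/2}$, which is not of Riesz type, so the promised $\Gamma$-quotient evaluation does not materialise. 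In short, the difficulty here is not "bookkeeping of normalisations" as you anticipate; it is that the displayed integral simply does not reduce to a Gindikin Gamma identity. The paper avoids all of this by using the already-known integer case as a pillar, which your argument does not invoke.
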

\noindent Here $\Gamma_{\Ol}$ is the Gindikin Gamma function for the cone $\Ol$,
$$\Gamma_{\Ol}(\bsn)=(2\pi)^{\frac{l(l-1)d}{4}}\prod^{l}_{j=1}\Gamma\left(\nu_j-(j-1)\tfrac{d}{2}\right).$$
The theorem is proved in the case $\bsn\in\N^l$ in 
\cite[Proposition 1.3.2 and remark 1.3.4]{ARUP}. We use this result and the following lemma, 
which is based on Blaschke's theorem (see \cite[Lemma A.1]{BK} for a detailed proof).
\begin{lem}
Let $f$ be a holomorphic function defined on the right half-plane 
$\{z\in\C\mid \Re z>0\}$. If $f$ is bounded and $f(n)=0$ for $n\in\N$, then $f$ is identically zero. 
\end{lem}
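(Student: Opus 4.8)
The plan is to reduce the statement to the classical Blaschke condition for bounded holomorphic functions on the unit disc. First I would transport the problem from the right half-plane to the open unit disc via the Cayley transform $\varphi(z)=\frac{z-1}{z+1}$, which maps $\{z\in\C\mid\Re z>0\}$ biholomorphically onto $\{w\in\C\mid |w|<1\}$, with inverse $w\mapsto\frac{1+w}{1-w}$. Setting $g=f\circ\varphi^{-1}$, the function $g$ is holomorphic and bounded on the unit disc, and it vanishes at each point $w_n:=\varphi(n)=\frac{n-1}{n+1}$ for $n\in\N$.

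The second step is to observe that the zero sequence $(w_n)$ violates the Blaschke condition. Since $w_n\in[0,1)$ one has $1-|w_n|=1-\frac{n-1}{n+1}=\frac{2}{n+1}$, so that $\sum_{n\geq1}(1-|w_n|)=\sum_{n\geq1}\frac{2}{n+1}=\infty$. By Blaschke's theorem, the zero set $(z_k)$ of a bounded holomorphic function on the unit disc that is not identically zero must satisfy $\sum_k(1-|z_k|)<\infty$ (counted with multiplicities); hence $g$ vanishes identically, and therefore so does $f=g\circ\varphi$.

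For a more self-contained argument that does not invoke the disc version of Blaschke's theorem, one can work directly on the half-plane: for each $N$ form the finite product $B_N(z)=\prod_{k=1}^N\frac{z-k}{z+k}$, which is holomorphic on $\{\Re z>0\}$, has modulus $1$ on the imaginary axis, and converges to $1$ as $|z|\to\infty$ for fixed $N$. Then $f/B_N$ is holomorphic and bounded on the half-plane (the zeros of $B_N$ at $1,\dots,N$ are cancelled by zeros of $f$), and a Phragm\'en--Lindel\"of argument bounds it by $M:=\sup|f|$ throughout. Fixing $z$ with $\Re z=x>0$ and letting $N\to\infty$, the estimate $1-\left|\frac{z-k}{z+k}\right|^2=\frac{4xk}{|z+k|^2}$ yields a divergent series, so $\prod_{k\geq1}\left|\frac{z-k}{z+k}\right|=0$ and hence $f(z)=0$.

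The only delicate point — and the main obstacle in the self-contained version — is justifying the Phragm\'en--Lindel\"of step, i.e.\ verifying that $f/B_N$ is genuinely bounded up to the imaginary axis and at infinity so that the half-plane maximum principle applies; with the Cayley-transform reduction this difficulty disappears entirely, which is why I would present the first argument as the main proof and mention the direct product argument only as an alternative.
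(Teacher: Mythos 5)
Your main argument (Cayley transform to the disc, then the divergence of $\sum_n (1-|w_n|)=\sum_n \tfrac{2}{n+1}$ against the Blaschke condition) is correct and complete, and it is precisely the route the paper indicates: the paper gives no proof of its own but states that the lemma ``is based on Blaschke's theorem'' and refers to the literature for details. Nothing further is needed; the alternative direct product argument you sketch is optional and, as you note yourself, would require extra care at the Phragm\'en--Lindel\"of step.
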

 
\begin{proof}[Proof of the theorem]
Let us set $z_j=\nu_j-\nu_{j+1}$, $j=1,\dots, l-1$, $z_l=\nu_l$, so that 
$\Re z_j\geq 0$ and $\nu_j=\sum_{k=j}^lz_k$. Let $x\in\Ol$ and let 
$$F(z_1,\dots,z_l)=\Phi_{\bsn(z_1,\dots,z_l)}(x)-\gamma_{\bsn(z_1,\dots,z_l)}^{(l)}
\Phi^{(l)}_{\bsn(z_1,\dots,z_l)}(x).$$
Let us fix $z_j=m_j\in\N$, $j=2,\dots,l$.
If $b\geq a>0$, one can see by Stirling's formula that $\frac{\Gamma(z+a)}{\Gamma(z+b)}$ 
is bounded on the right half plane. It follows that
the function 
$$z\mt\frac{\Gamma(z+\sum_{k=2}^{l}m_k+\frac{ld}{2})}{\Gamma(z+\sum_{k=2}^{l}m_k+\frac{rd}{2})}$$ 
is bounded and hence also $z\mt\gamma_{\bsn(z,m_2,\dots,m_l)}^{(l)}$.
Now
\begin{align*}
\nm{\Phi_{\bsn(z,m_2,\dots,m_l)}(x)}&\leq\int_{K}{\nm{\Delta_{(1)}^z(kx)\Delta_{(2)}^{m_2}(kx)\dots
\Delta_{(l)}^{m_l}(kx)}dk}\\
&\leq\sup_K\left(\Delta_{(2)}^{m_2}(kx)\dots\Delta_{(l)}^{m_l}(kx)\right)
\left(\sup_K\Delta_{(1)}(kx)\right)^{\Re(z)},
\end{align*}
and
$$\nm{\Phi^{(l)}_{\bsn(z,m_2,\dots,m_l)}(x)}\leq\sup_{K^{(l)}}\left(\Delta_{(2)}^{m_2}(kx)\dots
\Delta_{(l)}^{m_l}(kx)\right)\left(\sup_{K^{(l)}}\Delta_{(1)}(kx
)
\right)^{\Re(z)}$$
Let $\delta>\sup_K\Delta_{(1)}(kx)\geq\sup_{K^{(l)}}\Delta_{(1)}(kx)$. 
Then the holomorphic function $f(z)=F(z,m_2,\dots,m_l)\delta^{-z}$ is bounded and vanishes on $\N$, hence on 
the right half plane, i.e., for every $z\in\C$ with $\Re z>0$ and $m_j\in \N$, 
$$F(z,m_2,\dots,m_l)=0.$$
By the same argument one shows that for every $z_1\in\C$ with $\Re z_1>0$ and $m_j\in \N$, 
the map $z\mt F(z_1,z,m_3,\dots,m_l)$ vanishes identically, and
the proof follows by induction.
\end{proof}

\section{A series of spherical unitary representations}\label{S:sr}

In this section we introduce a family of spherical unitary representations that 
will occur in the decomposition of $L^2(\pol)$ under the 
action of $G$.

For $1\leq l\leq r-1$ let
$$\ov{\fn}_l=\bigoplus_{l\geq i<j}\fg^{-}_{ij}.$$
Note that it is a (nilpotent) Lie algebra and that (cf. \eqref{D:al})
$$\fa_l\oplus\R L(e)=\bigcap_{l\leq i<j}\ker\alpha^\pm_{ij}.$$
The closed subgroup $Z_G(\fa_l)=Z_G(\fa_l\oplus\R L(e))$ normalises $\ov{N}_l=\exp\ov{\fn}_l$ hence  
$$Q_l=Z_G(\fa_l)\ov{N}_l$$
is a subgroup of $G$. Moreover $Z_G(\fa_l)\cap\ov{N}_l=\{\id\}$ so this decomposition 
is a semidirect product.
The Lie algebra of $Q_l$,
$$\fq_l=\fz_\fg(\fa_l)\oplus\ov{\fn}_l=\fm\oplus\fa\oplus\bigoplus_{l<i<j}\fg^\pm_{ij}\oplus\bigoplus_{l\geq i<j}\fg^{-}_{ij},$$
where $\fm=\fz_\fk(\fa)$,
is a parabolic subalgebra of $\fg$, and since $Q_l$ is the normaliser of $\fq_l$ in $G$ \cite[7.83]{KNA}, 
it is a closed subgroup of $G$ 
(the parabolic subgroup associated to $\fq_l$). It is also the stabiliser of a flag 
of idempotents $(e_1,e_2,\dots,e_l)$. 
Note also that since $V(L(c_j),1)=\R c_j$, we have  
\begin{equation}\label{E:actcent}
Z_G(\fa_l)c_j=\R_+ c_j,\quad j=1,\dots, l. 
\end{equation}

\begin{lem}Let $A_l=\exp\fa_l$ and 
$$M_l=\cap_{j=1}^lZ_G(\fa_l)_{c_j}.$$
Then the multiplication map
$$M_l\times A_l\times \ov{N}_l\ra Q_l$$ 
is a diffeomorphism.
\end{lem}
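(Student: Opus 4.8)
The plan is to establish this as a Langlands-type decomposition for the parabolic $Q_l$, adapted to the fact that $Q_l$ is a semidirect product $Z_G(\fa_l)\ltimes\ov N_l$ rather than the standard minimal parabolic. First I would analyse the reductive group $Z_G(\fa_l)$ on its own. Since $\fa_l$ is contained in the maximal abelian subspace $\fa\subset\fp$, the centraliser $Z_G(\fa_l)$ is a reductive subgroup of $G$ stable under the Cartan involution $\Theta$, with Cartan decomposition having $\fa$ as a maximal abelian subspace of its own $\fp$-part. Its restricted root system is the subsystem of roots of $(\fg,\fa)$ vanishing on $\fa_l$, namely $\{\alpha^\pm_{ij}\mid l<i<j\}$, with the splitting $\fa=\fa_l\oplus(\fa\cap\text{(semisimple part)})$. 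The point is that $\fa_l$ sits in the \emph{centre} of $\fz_\fg(\fa_l)$, so $A_l$ is a central subgroup (modulo the compact part), while $M_l$ as defined is precisely the subgroup of $Z_G(\fa_l)$ killing the characters $c_j\mapsto\R_+c_j$ for $j\leq l$; by \eqref{E:actcent} these characters, taken together, cut out exactly the complement of $A_l$.

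Next I would prove that the multiplication map $M_l\times A_l\to Z_G(\fa_l)$ is a diffeomorphism. On Lie algebra level one checks $\fz_\fg(\fa_l)=\fm_l\oplus\fa_l$ as a direct sum, where $\fm_l=\mathrm{Lie}(M_l)=\{X\in\fz_\fg(\fa_l)\mid \delta_j(X)=0,\ j\leq l\}$ using the functionals $\delta_j$ dual to $L(c_j)$; this is immediate since $\fa_l=\langle L(c_j)\rangle_{j\leq l}$ and the $\delta_j$ separate it from a complement. That $A_l$ is a closed abelian subgroup meeting $M_l$ only in the identity follows because $A_l=\exp\fa_l$ and $\fa_l$ acts on $V$ with strictly positive real eigenvalues on each $c_j$, so an element of $A_l\cap M_l$ would have to be $\exp$ of an element on which every $\delta_j$ vanishes, i.e. the identity; surjectivity is then a standard consequence of the fact that $Z_G(\fa_l)$ is reductive with $\fa_l$ central and $\exp$ restricting to a diffeomorphism on the vector group $\fa_l$ complemented by the subgroup $M_l$ (one can also invoke \cite{KNA} for the structure of Levi subgroups). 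Finally I would glue this to the nilpotent part: the semidirect product structure $Q_l=Z_G(\fa_l)\ltimes\ov N_l$ already established, together with the just-proved diffeomorphism $M_l\times A_l\xrightarrow{\sim}Z_G(\fa_l)$, immediately gives that $M_l\times A_l\times\ov N_l\to Q_l$, $(m,a,\bar n)\mapsto ma\bar n$, is a diffeomorphism, since it is the composition of $\mathrm{id}\times(\text{multiplication})$ with the semidirect-product diffeomorphism.

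The main obstacle I expect is the surjectivity (and properness) of $M_l\times A_l\to Z_G(\fa_l)$: one must be sure that $Z_G(\fa_l)$ has no ``extra'' connected components or central torus directions beyond $A_l\times(\text{stuff inside }M_l)$ that would spoil the product decomposition. The cleanest way around this is to note that $G=GL_0(\Omega)$ is connected, hence so is its parabolic $Q_l$ and — by a standard argument for real parabolic subgroups of connected reductive groups — so is its Levi factor $Z_G(\fa_l)$; then the decomposition is forced on the Lie algebra level and exponentiates because $\fa_l$ is central in $\fz_\fg(\fa_l)$ and $M_l$ is, by its very definition as an intersection of isotropy groups, a closed subgroup with Lie algebra $\fm_l$. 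One subtlety worth checking explicitly is that $M_l$ as defined equals the identity component of that intersection together with exactly the right discrete part so that $M_l\cap A_l=\{\id\}$ with no overlap — this is where \eqref{E:actcent} is used, guaranteeing $A_l$ acts on the $c_j$ by genuinely distinct positive scalars, so the $A_l$-direction is detected precisely by the defining conditions of $M_l$.
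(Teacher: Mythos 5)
Your proposal correctly identifies the key input — equation~\eqref{E:actcent} — and correctly reduces the lemma to showing that $M_l\times A_l\to Z_G(\fa_l)$ is a diffeomorphism (the semidirect factorisation $Q_l=Z_G(\fa_l)\ltimes\ov N_l$ is already in the text), and your verification that $M_l\cap A_l=\{\id\}$ via eigenvalues on the $c_j$ is fine. But the argument you offer for \emph{surjectivity} is where the proof breaks. You propose to deduce it from connectedness of $Z_G(\fa_l)$ (``$G$ connected $\Rightarrow Q_l$ connected $\Rightarrow Z_G(\fa_l)$ connected'') and then ``exponentiate'' the Lie algebra decomposition $\fz_\fg(\fa_l)=\fm_l\oplus\fa_l$. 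Neither step holds as stated: parabolic subgroups and Levi factors of connected real reductive groups need not be connected (already $MA$ in $SL(2,\R)$ has two components), and even granting connectedness, a direct-sum decomposition of the Lie algebra into two subalgebras does not exponentiate to a global product decomposition of the group without further argument.

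The point you are missing is that \eqref{E:actcent} already hands you surjectivity (and smoothness of the inverse) directly, with no structure theory needed — which is why the paper's proof is a single sentence. For $z\in Z_G(\fa_l)$, \eqref{E:actcent} produces smooth positive functions $\lambda_j(z)$ with $zc_j=\lambda_j(z)c_j$ for $j\leq l$; setting $a(z)=\exp\bigl(\sum_{j\leq l}\log\lambda_j(z)\,L(c_j)\bigr)\in A_l$, one checks $za(z)^{-1}c_j=c_j$ for $j\leq l$, i.e.\ $za(z)^{-1}\in M_l$. Thus $z\mapsto\bigl(za(z)^{-1},a(z)\bigr)$ is an explicit smooth inverse to the multiplication map, giving the diffeomorphism in one stroke, with no appeal to connectedness, to the restricted root system of the Levi, or to \cite{KNA}. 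Your own phrase ``these characters cut out exactly the complement of $A_l$'' was pointing at exactly this; you just need to turn it into the explicit splitting $z=(za(z)^{-1})\,a(z)$ rather than detouring through a Levi-structure argument you cannot complete.
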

\begin{proof}
It is clear from \eqref{E:actcent} that the product map $M_l\times A_l\ra Z_G(\fa_l)$ is a smooth bijection 
hence it is a diffeomorphism. 
\end{proof}
\noindent Note that the decomposition in the preceding lemma is not exactly the Langlands decomposition of $Q_l$. 
However, it is more adapted to our purpose. We will let $a_l(q)$ denote the $A_l$-component of 
$q\in Q_l$ in the preceding decomposition.
For $\bsn\in({\fa_l}_\C)^*$, let $1\otimes e^\bsn\otimes1$ be the character of $Q_l$ defined by $(1\otimes 
e^\bsn\otimes1)(q)=e^{\bsn\log a_l(q)}$, and
let us denote by $C(G,Q_l,1\otimes e^{\bsn}\otimes 1)$ 
the Frechet space of continuous complex valued functions on $G$ that are 
$Q_l$-equivariant with respect to $1\otimes e^{\bsn}\otimes 1$, i.e., 
$$C(G,Q_l,1\otimes e^{\bsn}\otimes 1)=\{f\in C(G)\mid \forall q\in Q_l,\ f(gq)=
e^{-\bsn\log{a_l(q)}}f(g)\}.$$
The induced representation $\Ind_{Q_l}^{G}(1\otimes e^{\boldsymbol\nu}\otimes1)$ is the left 
regular representation of $G$ on 
$C(G,Q_l,1\otimes e^{\bsn}\otimes 1)$.

We will know determine values of $\bsn\in({\fa_l}_\C)^*$ for 
which the representation 
$$\Ind_{Q_l}^{G}(1\otimes e^{\boldsymbol\nu}\otimes 
1)\otimes\Delta^{-\frac{ld}{4}}$$
(cf. \eqref{D:char}) can be made unitary and irreducible.

The group $G$ 
admits the Levi decomposition
$$G=G'\times\R_{+}$$
where the semisimple part $G'$ is the kernel of the 
character $\Delta$. Then $K$ is a maximal compact subgroup of $G'$ and the Lie algebra 
$\fg'$ of $G'$ has Cartan decomposition $\fg'=\fk\oplus\fp'$ 
with $\fp'=\{L(x)\in\fp\mid \tr x=0\}$, and 
$\fa'=\fa\cap\fp'$ is maximal abelian in $\fp'$. Let
$$\fa_l'=\bigoplus_{1\leq j\leq l}\R \left(L(c_j)-\frac{L(e-e_l)}{r-l}\right)=\bigcap_{l\leq i<j}\ker\alpha^\pm_{ij},$$
$$\fm_l'=\fm\oplus\bigoplus_{j=l+1}^{r-1}
\R 
\left(L(c_j)-\frac{L(e-e_l)}{r-l}\right)\oplus
\bigoplus_{l<i<j}\fg^\pm_{ij}.$$
Then
$$\fq'_l=\fm_l'\oplus \fa_l'\oplus \ov{\fn}_l.$$
is a parabolic subalgebra of $G'$. The corresponding parabolic subgroup $Q'_l$ admits the 
Langlands decomposition 
$$Q_l'=M_l'A_l'\ov{N}_l,$$
where $A_l'=\exp\fa_l'$ and (cf. \cite[Ch. VII, Propositions 7.25, 7.27 and 7.82]{KNA})
$$M_l'=Z_K(\fa_l')\exp(\fm_l'\cap\fp'),$$
whose Lie algebra is $\fm'_l$.
For $\bsn\in({\fa'_l}_\C)^*$ the induced representation 
$$\Ind_{Q'_l}^{G'}(1\otimes e^{\boldsymbol\nu}\otimes 1)$$ 
is defined in the same way as for $G$.

\begin{lem}
\begin{enumerate}[(i)]
\item $Q_l= Q_l'\times\R_{+},$
\item $M_l'\subset M_l.$
\end{enumerate}
\end{lem}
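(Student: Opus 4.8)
The claim has two parts, and the natural strategy is to exploit the Levi decomposition $G = G' \times \R_+$ together with the explicit descriptions of the subalgebras $\fq_l$ and $\fq_l'$ already recorded in the excerpt. The plan is to first verify the statement at the level of Lie algebras and then lift it to the group level, using that $Q_l$ is connected (being a parabolic subgroup normalising $\fq_l$, together with the semidirect product structure $Q_l = Z_G(\fa_l)\ov N_l$).

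For part (i), I would start from the two explicit formulas
$$\fq_l = \fm \oplus \fa \oplus \bigoplus_{l<i<j}\fg^\pm_{ij} \oplus \bigoplus_{l\ge i<j}\fg^-_{ij}, \qquad \fq'_l = \fm'_l \oplus \fa'_l \oplus \ov{\fn}_l,$$
and observe that $\fa = \fa' \oplus \R L(e)$, with $\R L(e)$ the Lie algebra of the central $\R_+$ factor. Matching the pieces, one has $\fa_l' \oplus \R L(e) = \fa_l$ (which is visible from the defining formula for $\fa_l'$, since $\sum_{j\le l} L(c_j)$ together with the $L(c_j) - \frac{L(e-e_l)}{r-l}$ span the same space as the $L(c_j)$, $j\le l$, plus $L(e)$), and similarly $\fm'_l$ and $\fm_l := \fm \oplus \bigoplus_{j=l+1}^r \R L(c_j) \oplus \bigoplus_{l<i<j}\fg^\pm_{ij}$ differ exactly by the span of $L(e-e_l)$ — but that element is again in $\R L(e) \oplus \fa_l'$, so on the nose $\fq_l = \fq'_l \oplus \R L(e)$ as vector spaces, and this is a direct sum of subalgebras since $\R L(e)$ is central. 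Passing to groups: $\R_+ = \exp \R L(e)$ is central in $G$, $Q'_l$ is connected, and $Q_l = Z_G(\fa_l)\ov N_l$ with $Z_G(\fa_l) = M_l A_l$ by the preceding lemma; since $A_l = A_l' \cdot \R_+$ and $\ov N_l \subset G'$, one gets $Q_l = Q'_l \cdot \R_+$ with trivial intersection (an element of $\R_+ \cap Q'_l$ lies in $G' \cap \R_+ = \{e\}$, as $\R_+$ is the positive reals acting by scalars and $G'$ is the kernel of $\Delta$). This gives the internal direct product $Q_l = Q'_l \times \R_+$.

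For part (ii), I would argue directly from the definitions: $M_l' = Z_K(\fa_l')\exp(\fm_l' \cap \fp')$ and $M_l = \cap_{j=1}^l Z_G(\fa_l)_{c_j}$. It suffices to check that both generating pieces of $M_l'$ fix each $c_j$, $j = 1,\dots,l$. For $Z_K(\fa_l')$: an element centralising $\fa_l'$ in particular commutes with each $L(c_j) - \frac{L(e-e_l)}{r-l}$; combined with the fact (from \eqref{E:actcent}, or rather its infinitesimal shadow) that elements of $K$ fix $e$, one deduces it fixes each $c_j$ for $j \le l$. For $\exp(\fm_l'\cap\fp')$: the space $\fm_l'$ is built from $\fm = \fz_\fk(\fa)$, from root vectors $\fg^\pm_{ij}$ with $l < i < j$, and from the elements $L(c_j) - \frac{L(e-e_l)}{r-l}$ with $j > l$ — all of which annihilate $c_k$ for $k \le l$ (the root spaces $\fg^\pm_{ij}$ with $i,j > l$ act within $V^{(l)\perp}$, and $\fm$ centralises $\fa \ni L(c_k)$ hence fixes $c_k = L(c_k)e$). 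Therefore $\exp$ of any such element fixes each $c_k$, $k \le l$, and lies in $Z_G(\fa_l)$ since $\fm_l' \subset \fz_\fg(\fa_l)$ (one checks $[\fm_l', \fa_l] = 0$ from the root-space description). Hence $M_l' \subseteq M_l$.

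I expect the main technical obstacle to be part (ii), specifically the careful bookkeeping showing that every building block of $\fm_l'$ actually centralises $\fa_l$ and fixes the idempotents $c_1,\dots,c_l$ — this requires invoking the Peirce decomposition and the explicit root-space formulas $\fg^+_{ij} = \{a\square c_i \mid a\in V_{ij}\}$, $\fg^-_{ij} = \{a\square c_j \mid a\in V_{ij}\}$ to see how these operators act on the $c_k$, and keeping straight the shift by $\frac{L(e-e_l)}{r-l}$ which lives outside $\fg'$-adapted bases but inside $\fa_l$. Part (i), by contrast, is essentially a formal consequence of $\fa = \fa' \oplus \R L(e)$ with $\R L(e)$ central, once the identifications of the Levi and nilpotent pieces are in place.
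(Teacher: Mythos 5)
Your argument for part (ii) is essentially the paper's, but part (i) contains a genuine error.

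In part (i), the claimed identity $\fa_l' \oplus \R L(e) = \fa_l$ is false on dimensional grounds: the left side is $(l+1)$-dimensional while $\fa_l$ is $l$-dimensional. What is actually true is $\fa_l' \oplus \R L(e) = \fa_l \oplus \R L(e)$, i.e.\ both equal $\{\sum_j a_j L(c_j) : a_{l+1}=\cdots=a_r\}$. Correspondingly, $A_l = A_l'\cdot\R_+$ is false (again an $l$ versus $l+1$ mismatch), and the group-level chain $Q_l = M_l A_l \ov N_l = (M_l A_l' \ov N_l)\cdot\R_+$ does not reduce to $Q_l'\cdot\R_+$, since $M_l A_l' \ov N_l \neq Q_l' = M_l' A_l' \ov N_l$: one has $\dim M_l = \dim\fm_l' + 1$ (compare $\dim\fz_\fg(\fa_l) - l$ with $\dim\fm_l'$), so $M_l$ is strictly larger than $M_l'$. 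The extra $\R_+$ factor sits inside $M_l$, not $A_l$, which your bookkeeping misses. The paper sidesteps all of this: since $\R_+$ is central and $\R_+\subset Q_l$, one can write $Q_l = Q'\times\R_+$ with $Q' = Q_l\cap G'$, and then identify $Q' = Q_l'$ using the fact that $Q_l$ and $Q_l'$ are precisely the normalisers of $\fq_l$ in $G$ and $\fq_l'$ in $G'$, together with $\fq_l\cap\fg' = \fq_l'$ and the $\Ad(G)$-invariance of $\fg'$. Your correct observation $\fq_l = \fq_l'\oplus\R L(e)$ is the right starting point, but the lift to the group level must go through the normaliser characterisation (or at least through $Z_G(\fa_l) = Z_{G'}(\fa_l')\times\R_+$), not through $A_l$.

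In part (ii) the structure matches the paper (check both generating pieces of $M_l'$ land in $M_l$), but the step showing $Z_K(\fa_l')\subseteq M_l$ is asserted rather than proved: from $k\in K$ fixing $e$ and commuting with $L(c_j)-\tfrac{L(e-e_l)}{r-l}$ you only directly obtain $L(kc_j)-\tfrac{L(e-ke_l)}{r-l}=L(c_j)-\tfrac{L(e-e_l)}{r-l}$, which involves the unknown $ke_l$. The paper's trick is to sum this identity over $j=1,\dots,l$, solve to get $L(ke_l)=L(e_l)$, and only then conclude $L(kc_j)=L(c_j)$, hence $kc_j=c_j$. Your second bullet (that $\fm_l'$ annihilates $c_k$ for $k\le l$ and centralises $\fa_l$) is fine and matches the paper's one-line assertion.
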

\begin{proof}
To prove (i) we observe that since $\R_+\subset Q_l$, we can write $Q_l=Q'\times\R_{+}$, 
for some subgroup $Q'\subset G'$. Since $Q_l$ 
(resp. $Q_l'$) 
is the normaliser of $\fq_l$ (resp. $\fq_l'$) in $G$ (resp. 
$G'$), the inclusion $Q_l'\subset Q'$ is obvious and the converse follows 
from the fact that $\Ad(G)$ preserves $\fg'$.
Since $X.c_j=0$ for $X\in\fm_l'$ and $1\leq j\leq l$, the assertion (ii) will follow from 
$Z_K(\fa_l)=Z_K(\fa_l')$. Let 
$k\in Z_K(\fa_l')$, i.e., for $j=1,\dots, l$,
\begin{equation}\label{E:kframe}
k.\left(L(c_j)-\frac{L(e-e_l)}{r-l}\right)=L(kc_j)-\frac{L(e-ke_l)}
{r-l}=L(c_j)-\frac{L(e-e_l)}{r-l}. 
\end{equation}
By summing over $j$ one obtains
$$L(ke_l)-\frac{l}{r-l}{L(e-ke_l)}=L(e_l)-
\frac{l}{r-l}{L(e-e_l)},$$
and hence $L(ke_l)=L(e_l)$. By \eqref{E:kframe} we then have $L(kc_j)=L(c_j)$ for $j=1,\dots, l$, i.e., $k\in Z_K(\fa_l)$.
\end{proof}

Let us denote by 
$(\ti\delta_j)_{j=1\dots l}$ the dual basis of 
$(L(c_j)-\frac{L(e-e_l)}{r-l})_{j=1\dots l}$ in $({\fa'_{l}}_\C)^*$. By $\delta_j\mt\ti{\delta}_j$ 
we define an isomorphism 
$({\fa_{l}}_\C)^*\simeq({\fa'_{l}}_\C)^*$, $\bsn\mt\ti\bsn$.

Let $m_\bsn$ be the character of $\R_{+}$ defined by
\begin{equation}\label{D:mchar}
m_\bsn(\zeta)=\zeta^{-\frac{rld}{4}+\sum_{1\leq j}{\nu_j}}.
\end{equation}

\begin{pro}\label{P:isorep}
$$\Ind_{Q_l}^{G}(1\otimes e^\bsn\otimes 1)\otimes
\Delta^{-\frac{dl}{4}}\simeq\Ind_{Q_l'}^{G'}(1\otimes 
e^{\ti\bsn}\otimes 1)\otimes m_{\bsn}$$
\end{pro}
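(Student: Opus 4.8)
The plan is to unwind both sides of the claimed isomorphism into explicit function spaces on $G$ and $G'$ respectively, and then to exhibit an explicit linear isomorphism between them that intertwines the left regular actions. The first step is to use the Levi decomposition $G = G'\times\R_+$ together with part (i) of the preceding lemma, $Q_l = Q_l'\times\R_+$, to write every $g\in G$ uniquely as $g = (g',\zeta)$ with $g'\in G'$, $\zeta\in\R_+$, and similarly $q = (q',\zeta)\in Q_l'\times\R_+$. A function $f$ in $C(G, Q_l, 1\otimes e^\bsn\otimes 1)$ is then determined by its values $f(g',\zeta)$, and the equivariance $f(gq) = e^{-\bsn\log a_l(q)}f(g)$ must be compared with the equivariance defining $C(G', Q_l', 1\otimes e^{\ti\bsn}\otimes 1)$. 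The key point here is to relate the $A_l$-component $a_l(q)$ of $q\in Q_l$ (in the decomposition $Q_l = M_l A_l\ov N_l$ of the first lemma) to the $A_l'$-component of the $G'$-part of $q$ and to the $\R_+$-scaling part; since $\fa_l = \fa_l'\oplus\R L(e_l)$ modulo the center $\R L(e)$, writing $L(c_j) = (L(c_j) - \tfrac{L(e-e_l)}{r-l}) + \tfrac{1}{r-l}(\text{something in }\R L(e) \oplus\dots)$ will express $\bsn\log a_l(q)$ as $\ti\bsn\log a_l'(q') + (\text{multiple of }\sum_j\nu_j)\cdot\log\zeta$ plus a $\Delta$-contribution. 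This bookkeeping with the two bases $(\delta_j)$ and $(\ti\delta_j)$, and with the splitting of $L(c_j)$ into its trace-free part and its $L(e)$-part, is where the precise exponent $-\tfrac{rld}{4} + \sum_j\nu_j$ in the character $m_\bsn$ of \eqref{D:mchar} will emerge, and I expect this to be the main obstacle — not conceptually hard, but the one place where a sign or a normalization can go wrong.

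Concretely, I would first fix the identification: define the map sending $f\in C(G, Q_l, 1\otimes e^\bsn\otimes 1)\otimes\Delta^{-ld/4}$ (which is just $C(G,Q_l,1\otimes e^\bsn\otimes 1)$ as a space, with a twisted action) to the function $\Phi$ on $G'$ given by $\Phi(g') := f(g',1)$, i.e. restriction to the slice $\zeta = 1$. Because $G = G'\times\R_+$ with $\R_+\subset Q_l$, the value $f(g',\zeta)$ is recovered from $f(g',1)$ via the $\R_+$-equivariance, so this restriction is a bijection onto functions on $G'$. The second step is to check that $\Phi$ lands in $C(G', Q_l', 1\otimes e^{\ti\bsn}\otimes 1)$: this is exactly the computation above relating $a_l$ on $Q_l$ to $a_l'$ on $Q_l'$, using that the $M_l\times A_l\times\ov N_l$ decomposition restricts compatibly to the Langlands decomposition $M_l'A_l'\ov N_l$ of $Q_l'$ (here part (ii) of the lemma, $M_l'\subset M_l$, is what guarantees the $\ov N_l$ and $M$-type directions match up). The third step is to verify intertwining: for $h\in G$, write $h = (h',\eta)$; the left translation $(L_h f)(g',\zeta) = f(h^{-1}g', \eta^{-1}\zeta)$, and one reads off that under the identification this becomes left translation by $h'$ on the $G'$-side twisted by the scalar $m_\bsn(\eta)$ coming from how $f$ depends on the $\R_+$-variable. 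The character by which the $\R_+$-factor acts on $f$ — once the $\Delta^{-ld/4}$ twist is folded in — is precisely $m_\bsn$, which closes the argument.

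I would present it as: Proof. By the first lemma and part (i) of the second lemma we may decompose $G = G'\times\R_+$ and $Q_l = Q_l'\times\R_+$ compatibly; write $g=(g',\zeta)$. Given $f$ in the space on the left, set $(Tf)(g') = f(g',1)$. One checks, using the relation between the $A_l$-component on $Q_l$ and the $A_l'$-component on $Q_l'$ (which follows from $\fa_l = \fa_l'\oplus\R L(e_l)$ and the definition of $\ti\bsn$ via $\delta_j\mapsto\ti\delta_j$), that $Tf$ lies in $C(G', Q_l', 1\otimes e^{\ti\bsn}\otimes 1)$, and conversely every such function arises uniquely this way by imposing the correct $\R_+$-equivariance, so $T$ is a linear isomorphism. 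Finally, for $h = (h',\eta)\in G = G'\times\R_+$ and $f$ as above, a direct computation gives $T\big((\Ind_{Q_l}^G(1\otimes e^\bsn\otimes 1)\otimes\Delta^{-dl/4})(h)f\big) = m_\bsn(\eta)\,(\Ind_{Q_l'}^{G'}(1\otimes e^{\ti\bsn}\otimes 1)(h'))(Tf)$, where the scalar $m_\bsn(\eta)$ with exponent $-\tfrac{rld}{4}+\sum_j\nu_j$ collects the contribution of the $\R_+$-direction of the equivariance of $f$ together with the character $\Delta^{-dl/4}$; this is exactly the asserted intertwining with $\Ind_{Q_l'}^{G'}(1\otimes e^{\ti\bsn}\otimes 1)\otimes m_\bsn$. \qed
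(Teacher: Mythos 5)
Your proposal follows essentially the same route as the paper's proof: restrict $f$ to $G'$ (your slice $\zeta=1$), use the compatibility of $Q_l = Q_l'\times\R_+$ with the $M_l A_l\ov N_l$ and Langlands decompositions to relate $a_l(q)$ on $Q_l$ to $a_l'(q')$ on $Q_l'$, and then read off the scalar $m_\bsn(\eta)$ from the $\R_+$-equivariance combined with the $\Delta^{-ld/4}$ twist; this is exactly the paper's argument built around equation~\eqref{E:Qeq}. (One small slip: ``$\fa_l = \fa_l'\oplus\R L(e_l)$'' should read $\fa_l + \R L(e) = \fa_l'\oplus\R L(e_l)$, since $\fa_l$ and $\fa_l'$ are both $l$-dimensional and neither contains the other.)
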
 

\begin{proof}
Let us denote by $a_l(q')$ the $A'_l$-component of $q'$  
in the Langlands decomposition of $Q_l'$. Then for all $q=q'\xi\in Q_l=Q'_l\times\R_{+}$,
\begin{equation}\label{E:Qeq}
e^{\bsn\log a_l(q)}=\xi^{\sum{\nu_j}}e^{\ti\bsn\log a'_l(q')}.
\end{equation}
Indeed, if $q'=m'e^{\sum{s_j(L(c_j)-\frac{L(e-e_l)}{r-l}})}\ov{n}$
is the Langlands decomposition of $q'$ in $Q_l'$, then 
$$q=(m'e^{(-\frac{l}{r-l}+\log{\xi})L(e-e_l)})e^{\sum{s_jL(c_j)}+\log{\xi}L(e_l)}\ov{n}$$
is the Langlands decomposition of q in $Q_l$. 
If $f\in C(G,Q_l,1\otimes e^{\bsn}\otimes1)$, then by \eqref{E:Qeq}, its 
restriction $\ti f$ 
to $G'$ belongs to the space $C(G',Q_l',1\otimes e^{\ti\bsn}\otimes1)$. Conversely, 
if $\ti{f}\in C(G',Q_l',1\otimes e^{\ti\bsn}\otimes1)$, one obtains, again by \eqref{E:Qeq}, 
a function $f\in C(G,Q_l,1\otimes e^{\bsn}\otimes1)$ by setting 
$f(g'\zeta):=\zeta^{-\sum_j \nu_j}\ti{f}(g')$ and 
we obtain thereby a bijection 
$C(G,Q_l,1\otimes e^{\bsn}\otimes1)\simeq C(G',Q_l',1\otimes e^{\ti\bsn}\otimes1)$. 
Now the 
operator
\begin{align*}
\mathcal T: C(G,Q_l,1\otimes e^{\bsn}\otimes1)\otimes\C & \ra
 C(G',Q_l',1\otimes e^{\ti\bsn}\otimes1)\otimes\C,\\
f \otimes z & \mt \ti{f}\otimes z,
\end{align*}
intertwines the actions of $G$. Indeed, if $f\in 
C(G,Q_l,1\otimes e^{\bsn}\otimes1)$ and $g=g'\zeta\in G=G'\times\R_{+}$ then 
$\ti{f(g^{-1}\cdot)}=\ti{f(g'^{-1}\zeta^{-1}\cdot)}=\zeta^{
\sum{\nu_j}}\ti{f}(g'^{-1}\cdot)$, and hence
\begin{align*}
\mathcal{T}\left(f(g^{-1}\cdot)\otimes\Delta^{-\frac{dl}{4}}(g)z\right)
&=\ti{f(g^{-1}\cdot)}\otimes \Delta^{-\frac{ld}{4}}(g)z\\
&=\zeta^{\sum{\nu_j}}\ti{f}(g'^{-1}\cdot)\otimes \zeta^{-
\frac{rld}{4}}z\\
&=\ti{f}(g'^{-1}\cdot)\otimes \zeta^{-
\frac{rld}{4}+\sum{\nu_j}}z.
\end{align*} 
\end{proof}
 
The representation $\Ind_{Q_l}^{G}(1\otimes e^\bsn\otimes 1)\otimes
\Delta^{-\frac{ld}{4}}$ extends to a continuous representation (denoted by the same symbol) on the 
Hilbert completion of $C(G,Q_l,1\otimes e^\bsn\otimes1)\otimes\C$ 
with respect to
$$\nm{f\otimes z}^2=\int_{K}{f(k)dk}\nm{z}^2.$$

\begin{lem}\label{L:invMl} For any $m\in M_l$ and any $x\in V$,
$$\Delta_{(j)}(mx)=\Delta_{(j)}(x),\quad 1\leq j\leq l.$$
\end{lem}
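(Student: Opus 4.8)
The plan is to reduce the statement to the relative invariance of the Jordan determinant $\Delta^{(j)}$ of the Peirce subalgebra $V^{(j)}$ under its structure group. Fix $m\in M_l$ and $j\in\{1,\dots,l\}$, and write $m_j:=m|_{V^{(j)}}$. Since $m\in M_l\subset Z_G(\fa_l)$, the operator $m$ commutes with every $L(c_k)$, $k=1,\dots,l$; in particular it commutes with $L(e_j)=\sum_{k=1}^{j}L(c_k)$ and hence with the Peirce projector $P(e_j)=2L(e_j)^2-L(e_j)$. Consequently $m$ (and likewise $m^*$) preserves $V^{(j)}=P(e_j)V$ together with its orthogonal complement $\ker P(e_j)$, so $m_j\in GL(V^{(j)})$ is well defined and has adjoint $(m^*)|_{V^{(j)}}$ with respect to the restricted scalar product. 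Moreover $m$ fixes $c_1,\dots,c_j$, hence $me_j=e_j$, so $m_j$ fixes the unit $e_j$ of $V^{(j)}$.

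Next I would check that $m_j$ belongs to the structure group of $V^{(j)}$. As $m\in G$ lies in the structure group of $V$, we have $P(mx)=m\,P(x)\,m^*$ for all $x\in V$; restricting this to $x\in V^{(j)}$ and using that the quadratic representation $P^{(j)}$ of $V^{(j)}$ is the restriction of $P$ --- which is consistent precisely because $m$ and $m^*$ commute with $P(e_j)$ --- gives $P^{(j)}(m_j z)=m_j\,P^{(j)}(z)\,(m^*)|_{V^{(j)}}$ for all $z\in V^{(j)}$. Hence $m_j$ is in the structure group of $V^{(j)}$, so by relative invariance of the determinant (cf.\ \cite{FAKO}) there is a scalar $\chi(m_j)$ with $\Delta^{(j)}(m_j y)=\chi(m_j)\,\Delta^{(j)}(y)$ for all $y\in V^{(j)}$. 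Evaluating at $y=e_j$ and using $m_j e_j=e_j$ together with $\Delta^{(j)}(e_j)=1$ forces $\chi(m_j)=1$, i.e.\ $\Delta^{(j)}(m_j y)=\Delta^{(j)}(y)$ for every $y\in V^{(j)}$.

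Finally, I would combine the pieces: for any $x\in V$,
$$\Delta_{(j)}(mx)=\Delta^{(j)}\bigl(P(e_j)mx\bigr)=\Delta^{(j)}\bigl(m_j\,P(e_j)x\bigr)=\Delta^{(j)}\bigl(P(e_j)x\bigr)=\Delta_{(j)}(x).$$
I expect the only delicate point to be the restriction step in the second paragraph --- verifying that the structure-group identity for $V$ genuinely descends to $V^{(j)}$ --- and this works exactly because $m$ commutes with the Peirce projector $P(e_j)$, which is where the hypothesis $m\in Z_G(\fa_l)$ (rather than merely $me_j=e_j$) is used.
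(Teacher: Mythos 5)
Your proof is correct and follows essentially the same route as the paper, which likewise commutes $m$ past the Peirce projector $P(e_j)$ and then invokes invariance of $\Delta^{(j)}$ under $m|_{V^{(j)}}$. The only difference is that the paper simply asserts $m\in\Aut(V^{(j)})$ without justification, whereas you supply the verification (that $m|_{V^{(j)}}$ lies in the structure group of $V^{(j)}$ and fixes $e_j$, hence leaves $\Delta^{(j)}$ invariant by relative invariance and evaluation at $e_j$), which is a welcome bit of added detail.
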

\begin{proof}
Let $m\in M_l$. Then for $j=1,\dots, l$, $m$ commutes with $L(e_j)$, and $m\in\Aut{V^{(j)}}$, so 
\begin{align*}
\Delta_{(j)}(mx)=\Delta^{(j)}(P(c_j)(mx))=\Delta^{(j)}(mP
(c_j)(x))&=\Delta^{(j)}(P(c_j)(x))\\
&=\Delta_{(j)}(x).
\end{align*}
\end{proof}

\begin{pro} 
The map $g\mt\Delta_{-\bsn}(g^*e)$ is a (norm one) $K$-invariant vector in 
$C(G,Q_l,1\otimes e^\bsn \otimes1)$.
\end{pro}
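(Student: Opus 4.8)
The plan is to verify the two required properties in turn: first that the function $f_\bsn(g) := \Delta_{-\bsn}(g^*e)$ lies in $C(G,Q_l,1\otimes e^\bsn\otimes 1)$, i.e.\ that it is well-defined, continuous, and transforms correctly under right translation by $Q_l$; and second that it is $K$-invariant with norm one. The continuity is clear since $g\mapsto g^*e$ is smooth into $\Omega$ (recall that $g^*e = g^*g\cdot e \in \Omega$ because $g^*g$ is a positive operator preserving $\Omega$, or more simply because $\Theta$ preserves $G$ so $g^*\in G$ and $g^*e\in\Omega$), and $\Delta_{-\bsn}$ is smooth on $\Omega$. Since $\bsn\in(\fa_{l})_\C^*$ has vanishing last $r-l$ coordinates, $\Delta_{-\bsn}$ depends only on the minors $\Delta_{(1)},\dots,\Delta_{(l)}$, which is exactly what will let the $M_l$-part of $Q_l$ act trivially.

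For the equivariance, I would compute $f_\bsn(gq)$ for $q\in Q_l$. Writing $q = m a_l(q) \bar n$ with $m\in M_l$, $a_l(q)\in A_l$, $\bar n\in\ov N_l$ (the decomposition from the Lemma above), we get $(gq)^* e = q^* g^* e$, and $q^* = \bar n^* a_l(q)^* m^*$. The key points are: (1) $A_l$ consists of self-adjoint operators, so $a_l(q)^* = a_l(q)$, and by \eqref{E:hcef} applied inside $V^{(l)}$ (or directly), $\Delta_{-\bsn}(a_l(q)\, y) = e^{-\bsn\log a_l(q)}\Delta_{-\bsn}(y)$; (2) $\bar n^*\in N_l = \exp\bigoplus_{l\ge i<j}\fg^+_{ij}$, which stabilises the flag $(e_1,\dots,e_l)$ and acts trivially on each $V^{(j)}$, $j\le l$, so by an argument like that of Lemma~\ref{L:invMl}, $\Delta_{(j)}(\bar n^* y) = \Delta_{(j)}(y)$ for $j\le l$, hence $\Delta_{-\bsn}(\bar n^* y) = \Delta_{-\bsn}(y)$; (3) $m^*\in M_l$ as well (since $M_l = G\cap\Theta(G)\cap(\text{stabiliser stuff})$ — more carefully, $M_l\subset Z_G(\fa_l)$ is $\Theta$-stable, or one notes $M_l'$ is $\Theta$-stable and uses $M_l = Z_K(\fa_l)\cdot(\text{self-adjoint part})$), and Lemma~\ref{L:invMl} gives $\Delta_{-\bsn}(m^* y) = \Delta_{-\bsn}(y)$. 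Composing, $f_\bsn(gq) = \Delta_{-\bsn}(\bar n^* a_l(q) m^* g^* e) = e^{-\bsn\log a_l(q)} f_\bsn(g)$, which is the required transformation law.

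For $K$-invariance, the representation acts by left translation, so I must check $f_\bsn(k^{-1}g) = f_\bsn(g)$ for $k\in K$; but $f_\bsn(k^{-1}g) = \Delta_{-\bsn}((k^{-1}g)^* e) = \Delta_{-\bsn}(g^* k e)$, and $ke = e$ since $K$ stabilises $e$, so this equals $f_\bsn(g)$. Finally the norm: $\|f_\bsn\|^2 = \int_K |f_\bsn(k)|^2\, dk = \int_K |\Delta_{-\bsn}(k^* e)|^2\, dk = \int_K |\Delta_{-\bsn}(e)|^2\, dk = 1$, using $k^* = k^{-1}$ and $k^{-1}e = e$, together with $\Delta_{-\bsn}(e) = 1$.

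The main obstacle, and the step deserving the most care, is point (3) above — checking that the $M_l$-component $m$ of $q$, and the transpose $m^*$, genuinely lie in a group on which all of $\Delta_{(1)},\dots,\Delta_{(l)}$ are invariant, i.e.\ reconciling the decomposition $Q_l = M_l A_l \ov N_l$ (which is not quite the Langlands decomposition) with $\Theta$-stability. One clean route is to use Proposition~\ref{P:isorep} to transfer the computation to $Q_l' = M_l' A_l' \ov N_l$, where $M_l'$ is genuinely $\Theta$-stable (being reductive with maximal compact $Z_K(\fa_l')$), carry out the equivariance check there using the character $m_\bsn$ to account for the $\R_+$-factor, and then translate back; but a direct argument via Lemma~\ref{L:invMl} and the observation $(ge)^*\cdots$ should also work once one checks $M_l$ is $\Theta$-stable, which follows from $M_l' \subset M_l$ (Lemma above), $\R_+ \cap M_l = \{\id\}$, and $M_l = M_l' \cdot (\text{central }\R_+\text{-complement})$ being consistent with $\Theta(\zeta) = \zeta^{-1}$.
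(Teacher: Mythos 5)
Your proof takes the same route as the paper: decompose $q^* = \bar n^*\, a_l(q)^*\, m^*$ and handle the three factors in turn, reducing the $M_l$-part to Lemma~\ref{L:invMl}. The paper's own proof is exactly this (for the equivariance only; it leaves the $K$-invariance and norm computation, which you correctly supply, to the reader), except that for the $\bar n^*\, a_l(q)^*$ factor it simply observes $\bar n^* \in N$, $a_l(q)^* = a_l(q) \in A$, and invokes \eqref{E:hcef}.

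Two points in your argument need repair, even though both conclusions you draw are correct. First, in step (2) you assert that $\bar n^* \in N_l$ \emph{acts trivially on each} $V^{(j)}$ for $j \leq l$; this is false. Already for $V = \mathrm{Sym}(2,\R)$, $l=1$, one checks that $N = N_1$ moves $c_1$ off the line $\R c_1 = V^{(1)}$. What is true (and all you need) is that the principal minors $\Delta_{(j)}$, $j\le l$, are invariant under the full group $N$ — this is precisely what \eqref{E:hcef} encodes after observing $a(n)=e$ for $n\in N$. Lemma~\ref{L:invMl}'s proof mechanism (commuting with $L(e_j)$ and lying in $\Aut V^{(j)}$) does not transfer to $\bar n^*$, so "by an argument like that of Lemma~\ref{L:invMl}" is not a valid substitute here. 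Second, your structural description of $M_l$ at the end is off: the one-parameter group by which $M_l$ exceeds $M_l'$ is $\exp(\R\, L(e-e_l))$, not a complement inside the Levi centre $\R_+ = \exp(\R\, L(e))$ (indeed $\R_+ \cap M_l = \{\mathrm{id}\}$, as you note, but that does not make $\R_+$ the missing factor). The cleanest way to see $\Theta$-stability of $M_l$, which your proof (and implicitly the paper's) really needs, is direct: if $m\in Z_G(\fa_l)$ with $m c_j = c_j$ for $j\le l$, then $m^*$ commutes with each self-adjoint $L(c_j)$, hence preserves $\R c_j = V(c_j,1)$, and $\langle m^* c_j, c_j\rangle = \langle c_j, m c_j\rangle = \langle c_j, c_j\rangle$ forces $m^* c_j = c_j$. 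This is a point the paper glosses over (it applies Lemma~\ref{L:invMl} to $m^*$ without comment), so your instinct to single it out is sound; only the proposed justifications were muddled.
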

\begin{proof}
Let $q\in Q_l$ and let 
$q=man\in M_lA_l\ov{N}_l$. 
Then for $g\in G$, one 
has, since $n^*\in N$ and $a^*=a$,
$$\Delta_{-{\boldsymbol\nu}}((gq)^*e)=\Delta_{-{\boldsymbol\nu
}}(n^*a^*m^*g^*e)=e^{-{\boldsymbol\nu}\log(a)}
\Delta_{-{\boldsymbol\nu}}(m^*g^*e)$$
and because of Lemma \ref{L:invMl},
$$\Delta_{-{\boldsymbol\nu}}((gq)^*e)=e^{-{\boldsymbol\nu}\log
(a)}\Delta_{-{\boldsymbol\nu}}(g^*e).$$
\end{proof}

\noindent Let 
$$\bsr_l=\frac{d}{2}\sum_{l \geq i <j}{\frac{\delta_i-\delta_j}{2}}=
\frac{d}{4}\sum_{j=1}^{l}{(r+1-2j)}\delta_j$$
be the half sum of the negative $\fa_l\oplus\R L(e)$-restricted roots 
(counted with multiplicities).
\begin{theo}
For almost every $\bsl\in\fa_l^*$ (with respect to Lebesgue measure), the representation 
$\Ind_{Q_l}^{G}(1\otimes e^\bsn\otimes 1)\otimes
\Delta^{-\frac{ld}{4}}$ with $\bsn=i\bsl+\bsr_l+\frac{ld}{4}$ is an irreducible unitary 
spherical representation. 
\end{theo}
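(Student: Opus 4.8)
The plan is to reduce the statement, via Proposition~\ref{P:isorep}, to the unitarity and irreducibility of the semisimple-part representation $\Ind_{Q'_l}^{G'}(1\otimes e^{\widetilde\bsn}\otimes 1)$, and then to recognise the latter as a spherical principal series representation of the real reductive group $G'$. Indeed, the twisting character $m_\bsn$ is unitary precisely when $\Re(-\frac{rld}{4}+\sum_j\nu_j)=0$, which, with $\bsn=i\bsl+\bsr_l+\frac{ld}{4}$, amounts to checking that $\sum_j\nu_j-\frac{rld}{4}$ is purely imaginary: one computes $\sum_j(\rho_l)_j = \frac{d}{4}\sum_{j=1}^l(r+1-2j) = \frac{d}{4}(rl - l^2) $, so $\sum_j\nu_j = i\sum_j\lambda_j + \frac{d}{4}(rl-l^2) + \frac{l^2 d}{4} = i\sum_j\lambda_j + \frac{rld}{4}$, and hence $m_\bsn(\zeta)=\zeta^{i\sum_j\lambda_j}$ is unitary. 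So the twist contributes a unitary character of $\R_+$, and it remains to handle $\Ind_{Q'_l}^{G'}(1\otimes e^{\widetilde\bsn}\otimes 1)$.

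The next step is to identify $\widetilde\bsn$ under the isomorphism $({\fa_l}_\C)^*\simeq({\fa'_l}_\C)^*$ and verify it has the form ``$i(\text{real}) + \rho$'' for the relevant $\rho$. The parabolic $Q'_l = M'_lA'_l\overline N_l$ has nilradical $\overline{\fn}_l=\bigoplus_{l\geq i<j}\fg^-_{ij}$, and $\bsr_l$ was defined precisely as the half-sum of the $\fa_l\oplus\R L(e)$-roots occurring in $\overline{\fn}_l$ (with multiplicities), which transports under $\delta_j\mapsto\widetilde\delta_j$ to the half-sum $\rho(\overline{\fn}_l)$ of the $\fa'_l$-roots in $\overline{\fn}_l$; note the shift by $\frac{ld}{4}$ lies in the direction killed by restriction to $\fa'_l$ (it is a multiple of the central character direction absorbed into $m_\bsn$), so $\widetilde\bsn = i\widetilde\bsl + \rho(\overline{\fn}_l)$ with $\widetilde\bsl\in({\fa'_l})^*$ real. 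Thus $\Ind_{Q'_l}^{G'}(1\otimes e^{\widetilde\bsn}\otimes 1)$ is, after the usual normalisation, a unitary principal series representation of $G'$ induced from the minimal-type parabolic $Q'_l$ with unitary character on $A'_l$ and trivial representation of $M'_l$; such representations carry a $G'$-invariant inner product given by integration over $K$ (the restriction $C(G',Q'_l,\dots)\to C(K)$ being a bijection onto the appropriate space by the Iwasawa decomposition $G=\overline N_l A'_l M'_l K$-type argument), which is exactly the norm $\|f\otimes z\|^2 = \int_K |f(k)|^2\,dk\,|z|^2$ written down before the theorem. This gives unitarity for \emph{all} $\bsl\in\fa_l^*$, not merely almost every.

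For irreducibility, the point is that a spherical unitary principal series representation $\Ind_{P}^{G'}(1\otimes e^{i\nu}\otimes 1)$ of a real reductive group is irreducible for $\nu$ in a Zariski-dense (in particular full-measure) subset of $({\fa'_l})^*$; this is classical — Bruhat's irreducibility criterion together with the Harish-Chandra/Kostant theory of the spherical principal series and the fact that the reducibility locus is contained in the zero set of a nonzero analytic function (built from intertwining operators / the $c$-function). One should cite the appropriate reference (e.g.\ Knapp's book, or Helgason) for: (a) invariance of the $L^2(K)$ inner product; (b) irreducibility of the spherical unitary principal series off a measure-zero set. Sphericity is immediate: the constant function $\mathbf 1\in C(K)$ corresponds to a $K$-fixed vector, which by the preceding proposition is realised concretely as $g\mapsto\Delta_{-\bsn}(g^*e)$ in $C(G,Q_l,1\otimes e^\bsn\otimes 1)$, and tensoring with a fixed $z$ keeps it $K$-invariant.

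The main obstacle I expect is purely bookkeeping: carefully matching the ``non-Langlands'' decomposition $Q_l = M_lA_l\overline N_l$ used throughout the paper with the genuine Langlands decomposition $Q'_l = M'_lA'_l\overline N_l$ of the semisimple part, so as to confirm that (i) the half-density/$\rho$-shift built into $\bsn = i\bsl + \bsr_l + \frac{ld}{4}$ is exactly the one making the principal series unitarily induced (rather than off by a character), and (ii) the extra $\frac{ld}{4}$ and the $\Delta^{-ld/4}$ twist conspire, via $m_\bsn$, to produce a \emph{unitary} character of $\R_+$ with no leftover real part — the computation sketched above. Once that normalisation is pinned down, unitarity is automatic and irreducibility is a citation; so the real content is the identification of the representation, which Proposition~\ref{P:isorep} has already largely done.
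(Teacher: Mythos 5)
Your overall strategy matches the paper: reduce via Proposition~\ref{P:isorep} to the pair $\bigl(\Ind_{Q'_l}^{G'}(1\otimes e^{\ti\bsn}\otimes 1),\, m_\bsn\bigr)$, check $m_\bsn$ is a unitary character of $\R_+$, verify $\ti\bsn$ has the form $i(\text{real})+\rho$, and invoke Bruhat's theorem for irreducibility almost everywhere. Your calculation showing $m_\bsn(\zeta)=\zeta^{i\sum_j\lambda_j}$ is correct and is the same as the paper's first step. Sphericity via the $K$-fixed vector is also as in the paper.

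However, there is a genuine gap in the step identifying $\ti\bsn$. You assert that $\bsr_l$ ``transports under $\delta_j\mapsto\ti\delta_j$ to the half-sum $\rho(\ov{\fn}_l)$ of the $\fa'_l$-roots in $\ov{\fn}_l$,'' and that ``the shift by $\frac{ld}{4}$ lies in the direction killed by restriction to $\fa'_l$.'' Both claims are false. The map $\bsn\mapsto\ti\bsn$ is the formal correspondence of dual bases $\delta_j\mapsto\ti\delta_j$; in particular $\ti{\frac{ld}{4}\sum_{j\leq l}\delta_j}=\frac{ld}{4}\sum_{j\leq l}\ti\delta_j\neq 0$. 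Also $\frac{ld}{4}\sum_{j\leq l}\delta_j$ is not proportional to the central direction $\sum_{j\leq r}\delta_j$ unless $l=r$. More to the point, the negative $\fa'_l$-restricted roots in $\ov{\fn}_l$ that come from pairs $i\leq l<j$ do not vanish on $\fa'_l$: for $k\leq l$ and $j>l$ one has $\delta_j\bigl(L(c_k)-\tfrac{L(e-e_l)}{r-l}\bigr)=-\tfrac{1}{r-l}$. Summing these nonzero contributions is exactly what produces the extra $\frac{ld}{4}$. The correct identity is
\[
\ti{\,\bsr_l+\tfrac{ld}{4}\,}=\rho(\ov{\fn}_l),
\]
which is the content of the computation in the paper's proof, and this is what forces the $\frac{ld}{4}$-shift in the definition of $\bsn$. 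Your two incorrect claims cancel to give the right final expression $\ti\bsn=i\ti\bsl+\rho(\ov{\fn}_l)$, but by coincidence rather than by argument; the passage from $\bsr_l$ on $\fa_l\oplus\R L(e)$ to the half-sum on $\fa'_l$ requires the explicit evaluation of the roots $\alpha^{-}_{ij}$, $i\leq l<j$, on the new basis of $\fa'_l$, and that computation cannot be waved away.

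A minor terminological point: $Q'_l$ is not the minimal parabolic of $G'$ (unless $l=r-1$), so the induced representation is a degenerate rather than a full spherical principal series. Your citation of Bruhat's irreducibility criterion is nonetheless appropriate (and is exactly what the paper cites); just be careful not to lean on statements specific to the minimal-parabolic/Kostant setting.
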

\begin{proof}
First, let us remark that if $\bsn=i\bsl+\bsr_l+\frac{ld}{4}$ with $\bsl\in\fa_l^*$, 
then since
$$\sum{\nu_j}-\frac{rld}{4}=i\sum{\lambda_j}+
\frac{d}{4}\left(l(r+1+l)-2\frac{l(l+1)}{2}-rl\right)=i\sum{\lambda_j},$$
the representation $m_\bsn$ (cf.~\eqref{D:mchar}) is unitary.
We now claim that $\ti{\bsr_l+\frac{dl}{4}}$ is the 
half sum of the negative $\mathfrak{a}'_l$-restricted roots (counted with 
multiplicities).
Indeed,
\begin{align*}
\frac{d}2\sum_{l\geq i<j}\frac{\delta_i-\delta_j}{2}&
\left(L(e_k)-\frac{L(e-e_l)}{r-l}\right)\\
&=\frac{d}{4}(r+1-2k)+\frac{d}2
\sum_{i\leq l<j}
\frac{\delta_i-\delta_j}{2}\left(-\frac{L(e-e_l)}{r-l}\right)\\
&=\frac{d}{4}(r+1-2k)+\frac{d}4\sum_{i\leq l<j}{(-\delta_j)}\left(-
\frac{1}{r-l}\sum_{k>l}{L(c_k)}\right)
\\
&=\frac{d}{4}(r+1-2k)+\frac{ld}{4}=\ti{\bsr_l+\frac{ld}{4}}\left(L(e_k)-\frac{L(e-e_l)}{r-l}\right).
\end{align*}
The theorem now follows from Proposition \ref{P:isorep} and Bruhat's theorem \cite[Theorem 2.6]{VdB}.
\end{proof}
\noindent Let us note $\bsr_l'=\bsr_l+\frac{ld}{4}$. We now set 
$$(\pi_\bsn,\ch_\bsn):=\Ind_{Q_l}^{G}(1\otimes 
e^{i\bsn+\bsr'_l}\otimes 1)\otimes
\Delta^{-\frac{ld}{4}},$$
and
$$v_{\bsn}:=\Delta_{-(i\bsn+\bsr'_l)}((\cdot)^*e)\otimes1.$$ 

Let us compute the positive definite spherical function associated to $\pi_\bsn$, that is,
$$\Phi(g)=\langle\pi_\bsn(g)v_\bsn,v_{\bsn}\rangle_\bsn=
\int_{K}{\Delta_{-(i\bsn+\bsr'_l)}((g^{-1}k)^*e)dk}\Delta^{-\frac{ld}{4}}(g).$$ 
Since $g^{-*}e=\Theta(g)e=(ge)^{-1}$ \cite[Theorem III.5.3]{FAKO}, $\Phi$ can be written, 
as a function on $\Omega$,
$$\Phi(x)=\int_{K}{\Delta_{-(i\bsn+\bsr'_l)}(x^{-1})dk}\Delta^{-\frac{ld}{4}}(x)=
\Phi_{-(i\bsn+\bsr'_l)}(x^{-1})\Delta^{-\frac{ld}{4}}(x),$$
and since $\Phi_{-(i\bsn+\bsr'_l)}(x^{-1})=\Phi_{i\bsn+\bsr'_l+2\bsr}(x)$
with $\bsr=\sum_{j=1}^r{(2j-r-1)\delta_j}$ \cite[Theorem XIV.3.1 (iv)]{FAKO},
$$\Phi(x)=\Phi_{i\bsn+\boldsymbol\eta_l+\bsr}(x)\quad\text{where}\quad \boldsymbol\eta_l=\frac{d}{4}\sum_{j=l+1}^r{(2j-l-r-1)\delta_j}.$$
Recall \cite[Theorem XIV.3.1 (iii)]{FAKO} that $\Phi_{\bsn'+\bsr}=\Phi_{\bsn+\bsr}$ if and only if $\bsn'=w\bsn$ for $w\in W$. 
Hence the representations 
$\pi_\bsl$ and $\pi_{\bsl'}$, with $\bsl$, $\bsl'$ in $\fa_l^*$, are equivalent if and 
only if $i\bsl'+\boldsymbol\eta_l=w(i\bsl+\boldsymbol\eta_l)$. Since $\boldsymbol\eta_l$ is 
real it follows that $\pi_{\bsl'}$ is equivalent to $\pi_\bsl$ if and only if $\bsl'=w\bsl$ with 
$w\in W_l:=\mathfrak{S}_l$.

\section{The intertwining operator and the Plancherel formula}

Let $\bsn\in\C^l$ such that $\Re\left(-(i\bsn+\bsr'_l)\right)\geq0$. Then for 
$f\in\cpol$ and $g\in G$, the formula
\begin{equation}\label{D:intop}
T_\bsn f(g)=\int_{\pol}{f(x)\Delta_{-(i\bsn+\bsr'_l)}(g^*x)d\mu_l(x)}
\end{equation}
defines a continuous function on $G$. Moreover, it follows from \eqref{E:hcef} and Lemma~\ref{L:invMl} that
\begin{equation*}
T_\bsn f\in C(G,Q_l,1\otimes e^{i\bsn+\bsr'_l}\otimes1). 
\end{equation*}


\noindent We will also view $T_{\boldsymbol\nu}$ as an operator with 
values in $C(G,Q_l,1\otimes e^{i\bsn+\bsr'_l}\otimes1)\otimes\C$ (in the obvious way), and hence 
in $\ch_\bsn$.

\begin{lem}\label{L:inter}
For  $\Re\left(-(i\bsn+\bsr'_l)\right)\geq0$, the operator
$$T_{\boldsymbol\nu}:C^\infty_0(\pol)\ra \ch_\bsn$$
intertwines $\pi^{l}$ and $\pi_\bsn$. 
\end{lem}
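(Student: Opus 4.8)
The plan is to verify directly that $T_{\bsn}$ commutes with the two group actions, by a straightforward change of variables in the defining integral \eqref{D:intop}, using the transformation property of the measure $\mu_l$ and the quasi-invariance \eqref{E:hcef} of the power function $\Delta_{-(i\bsn+\bsr'_l)}$. Recall that $\pi^l$ acts on $L^2(\pol,\mu_l)$ by $\pi^l(h)f=\Delta^{\frac{ld}{4}}(h)f(h^*\cdot)$ and that $\pi_\bsn$ is the left regular representation on $C(G,Q_l,1\otimes e^{i\bsn+\bsr'_l}\otimes1)\otimes\C$ twisted by $\Delta^{-\frac{ld}{4}}$, i.e. $(\pi_\bsn(h)F)(g)=\Delta^{-\frac{ld}{4}}(h)F(h^{-1}g)$ on the first tensor factor. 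So the identity to prove is $T_\bsn(\pi^l(h)f)=\pi_\bsn(h)(T_\bsn f)$ for every $h\in G$.

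\noindent First I would fix $h\in G$, $f\in\cpol$, $g\in G$ and compute
$$T_\bsn(\pi^l(h)f)(g)=\Delta^{\frac{ld}{4}}(h)\int_{\pol}f(h^*x)\,\Delta_{-(i\bsn+\bsr'_l)}(g^*x)\,d\mu_l(x).$$
Then I would substitute $x=(h^*)^{-1}y=h^{-*}y$. Since $h^{-*}=\Theta(h)\in G$, the set $\pol$ is preserved, and by the transformation rule $d\mu_l(h^{-*}y)=\Delta^{\frac{ld}{2}}(h^{-*})\,d\mu_l(y)=\Delta^{-\frac{ld}{2}}(h)\,d\mu_l(y)$ (using that $\Delta$ is a character and $\Delta(h^{-*})=\Delta(h)^{-1}$, as $\Theta$ preserves $\Delta$ up to inverse — this follows from $g^{-*}e=(ge)^{-1}$ cited from \cite[Theorem III.5.3]{FAKO} combined with $\Delta(x^{-1})=\Delta(x)^{-1}$). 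The argument of the power function becomes $g^*h^{-*}y=(h^{-1}g)^*y$, so
$$T_\bsn(\pi^l(h)f)(g)=\Delta^{\frac{ld}{4}}(h)\Delta^{-\frac{ld}{2}}(h)\int_{\pol}f(y)\,\Delta_{-(i\bsn+\bsr'_l)}\bigl((h^{-1}g)^*y\bigr)\,d\mu_l(y)=\Delta^{-\frac{ld}{4}}(h)\,(T_\bsn f)(h^{-1}g),$$
which is exactly $(\pi_\bsn(h)(T_\bsn f))(g)$ — here I identify the $\Delta^{-\frac{ld}{4}}$-twist in $\pi_\bsn$ with the scalar factor, since $T_\bsn f$ is viewed inside $C(G,Q_l,1\otimes e^{i\bsn+\bsr'_l}\otimes1)\otimes\C$ and the twist acts on the $\C$-factor.

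\noindent The convergence of all integrals along the way is guaranteed by the hypothesis $\Re\bigl(-(i\bsn+\bsr'_l)\bigr)\geq0$, which, as noted just before the lemma, is precisely what makes $\Delta_{-(i\bsn+\bsr'_l)}$ continuous on $\ov\Omega$ (hence bounded on the compact supports involved) and guarantees that $T_\bsn f$ is a well-defined continuous function lying in $C(G,Q_l,1\otimes e^{i\bsn+\bsr'_l}\otimes1)$. I expect the main point requiring care — rather than a genuine obstacle — to be the careful bookkeeping of the characters: tracking exactly how $\Delta(h^{-*})$, the measure factor $\Delta^{\frac{ld}{2}}$, and the twist $\Delta^{-\frac{ld}{4}}$ combine, and checking that the substitution $x\mapsto h^{-*}y$ indeed preserves $\pol=Ge_l$ (immediate, since $h^{-*}\in G$). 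Everything else is a one-line change of variables.
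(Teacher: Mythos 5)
Your proof is correct and follows the same route as the paper: substitute $x\mapsto h^{-*}y$, use the transformation law of $\mu_l$ together with $\Delta(h^{-*})=\Delta(h)^{-1}$, and the identity $g^*h^{-*}=(h^{-1}g)^*$, collecting the powers of $\Delta$. The paper carries out precisely this computation (slightly more tersely), so your argument matches it in substance.
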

\begin{proof}
Let us set $\bsn'=-(i\bsn+\bsr'_l)$.
Let $h\in G$. Then
\begin{align*}
T_{\boldsymbol\nu}(h.f)(g)&=\int_{\partial\Omega_l}
{\Delta^{\frac{ld}{4}}(h) f(h^*x)
\Delta_{\bsn'}(g^*x)\dmul(x)}\\
&=\Delta^{-\frac{ld}{4}}(h)
\int_{\partial\Omega_l}{f(x)\Delta_{\bsn'}(g^*h^{-*}x)\dmul(x)}\\ 
&=\Delta^{-\frac{ld}{4}}(h)\int_{\partial\Omega_l}
{f(x)\Delta_{\bsn'}((h^{-1}g)^*x)\dmul(x)},
\end{align*}
i.e.,
\begin{equation}
T_{\boldsymbol\nu}(h.f)(g)=\Delta^{-\frac{ld}{4}}(h)
T_{\boldsymbol\nu}(f)(h^{-1}g). \label{E:inter}
\end{equation}
\end{proof}

Since $\bsr'_l=\frac{d}{2}\sum_{j=1}^l(r+l+1-2j)\delta_j$, 
we do not have $\Re((-i\bsl+\bsr'_l))\geq0$ when $\bsl\in\fa_l^*$, and the integral \eqref{D:intop} does not 
converge. This means that the integral has to be interpreted in a
suitable sense using analytic continuation in the parameter $\bsn$. For this we recall 
that when $\bsn\in\C^l$, the Riesz distribution $R_{\bsn+\frac{ld}{2}}$ 
on $V$ can be defined as the analytic continuation of the following integral 
$$R_{\bsn+\frac{ld}{2}}(F)=\Gamma_{\Ol}\left(\bsn+\tfrac{ld}{2}\right)^{-1}
\int_{\pol}{F(x)\Dn(x)\dmul(x)},\quad F\in\mathcal{S}(V),$$
where $\mathcal{S}(V)$ is the Schwartz space of $V$, and that it has support in $\ov{\pol}$ 
(cf. \cite[Theorem 5.1 and 5.2]{ISHI}, where the 
integral is actually defined over $\mathcal{O}_l=\{x \in \pol \mid \Delta_{(l)}(x) \neq 0\}$,
but $\mu_l(\pol\setminus\mathcal{O}_l)=0$). The restriction (denoted by the same symbol) to the 
open set $V_{\geq l}$ is then 
a distribution with support in the submanifold $\pol$. We can therefore consider the vertical restrictions
$R_{\bsn+\frac{ld}{2}}\mid_{\pol}$ (cf. Appendix A). Since $R_{\bsn+\frac{ld}{2}}$ is a measure with 
support on $\pol$ for $\Re\bsn \geq 0$, these restrictions do not depend on the choice of 
a tubular neighbourhood (cf. Proposition~\ref{P:invres}).
For $\Re(-(i\bsn+\bsr_l'))\geq 0$, we have
\begin{equation*}
T_{\bsn}f(g)=\Gamma_{\Ol}\left(-(i\bsn+\bsr'_l)+\tfrac{ld}2\right)
\Delta^{-\frac{ld}{2}}(g)\left(R_{-(i\bsn+\bsr'_l)+\frac{ld}{2}}\right)\mid_{\pol}\left(f(g^{-*}\cdot)\right). 
\end{equation*}
Hence, we can let the right hand side define an analytic continuation of the integrals 
$T_{\bsn}f(g)$. It is defined on the complement $\mathcal{Z}$
of the set of poles of the meromorphic function $\Gamma_{\Ol}\big(-(i\bsn+\bsr'_l)\big)$.
Since for fixed $f \in \cpol$ the map $g\mt f(g^{-*}\cdot)$ is continuous, 
the function $g \mapsto T_{\bsn}(f)(g)$ is continuous. 

\begin{pro}
For any $\bsn \in\mathcal{Z}$, 
$$T_\bsn f\in C(G,Q_l,1\otimes e^{i\bsn+\bsr'_l}\otimes1),$$
and the operator 
$$T_{\boldsymbol\nu}:C^\infty_0(\pol)\ra \ch_\bsn$$
intertwines $\pi^{l}$ and $\pi_\bsn$.
\end{pro}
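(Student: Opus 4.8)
The plan is to deduce both assertions from the corresponding facts on the convergence region
$$U:=\{\bsn\in\C^l\mid\Re(-(i\bsn+\bsr'_l))\geq0\},$$
where they have already been established — the $Q_l$-equivariance in the paragraph preceding Lemma~\ref{L:inter}, and the intertwining relation \eqref{E:inter} in Lemma~\ref{L:inter} itself — by analytic continuation in the parameter $\bsn\in\C^l$.

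Two facts from the discussion preceding the statement serve as input: for every $\bsn\in\mcZ$ and $f\in\cpol$ the function $g\mapsto T_\bsn f(g)$ is continuous on $G$, and, for fixed $f\in\cpol$ and $g\in G$, the function $\bsn\mapsto T_\bsn f(g)$ is holomorphic on $\mcZ$. The latter rests on the formula displayed before the statement: there $T_\bsn f(g)$ appears as the product of the constant $\Delta^{-\frac{ld}{2}}(g)$, the meromorphic factor $\Gamma_{\Ol}\big(-(i\bsn+\bsr'_l)+\tfrac{ld}{2}\big)$, and the scalar obtained by pairing $\big(R_{-(i\bsn+\bsr'_l)+\frac{ld}{2}}\big)\!\mid_{\pol}$ with the test function $f(g^{-*}\cdot)$; since $\bsn\mapsto R_{-(i\bsn+\bsr'_l)+\frac{ld}{2}}$ is an entire distribution-valued function supported on $\overline{\pol}$ (cf.~\cite{ISHI}), since the restriction to $V_{\geq l}$ followed by the vertical restriction to $\pol$ of Appendix~\ref{A1} is a continuous linear operation on distributions supported on $\pol$ and so preserves holomorphy, and since $f(g^{-*}\cdot)$ — the pullback of $f$ by the diffeomorphism of $\pol$ induced by $g^{-*}\in G$ — is a fixed element of $\cpol$, this scalar is entire in $\bsn$, and the product is holomorphic on $\mcZ$.

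Next I would record two elementary topological facts. First, $\mcZ$ is connected: it is the complement in $\C^l$ of a locally finite union of complex affine hyperplanes, namely the hyperplanes $\{\nu_j=c\}$ ($1\leq j\leq l$) carrying a pole of a Gamma factor of $\Gamma_{\Ol}(-(i\bsn+\bsr'_l))$. Second, $U$ has nonempty interior — it constrains only the imaginary parts of the $\nu_j$ — and since the excluded hyperplanes all lie in $\bigcup_{j=1}^l\{\Re\nu_j=0\}$, the set $U^\circ\cap\mcZ$ is a nonempty open subset of $\mcZ$.

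The continuation is then immediate. Fix $f\in\cpol$, $g\in G$ and $q\in Q_l$: both $\bsn\mapsto T_\bsn f(gq)$ and $\bsn\mapsto e^{-(i\bsn+\bsr'_l)\log a_l(q)}T_\bsn f(g)$ are holomorphic on $\mcZ$ and, by the $Q_l$-equivariance valid on $U$, agree on $U^\circ\cap\mcZ$; by the identity theorem for holomorphic functions of several variables they agree on all of $\mcZ$, which together with the continuity of $T_\bsn f$ gives $T_\bsn f\in C(G,Q_l,1\otimes e^{i\bsn+\bsr'_l}\otimes1)$. Likewise, fixing $f\in\cpol$ and $g,h\in G$ — so that $\pi^l(h)f$ is a fixed test function independent of $\bsn$ — both sides of \eqref{E:inter} are holomorphic in $\bsn$ on $\mcZ$ and agree on $U^\circ\cap\mcZ$ by Lemma~\ref{L:inter}, hence on all of $\mcZ$; this is precisely the statement that $T_\bsn\colon\cpol\to\ch_\bsn$ intertwines $\pi^l$ and $\pi_\bsn$. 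The main obstacle is the holomorphy recorded in the second paragraph: one has to verify that the restriction of Appendix~\ref{A1} depends holomorphically on the varying distribution and that the $\Gamma_{\Ol}$-normalisation of the Riesz family exactly cancels the poles removed in the definition of $\mcZ$; once this is in hand the remainder of the argument is purely formal.
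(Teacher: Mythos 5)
Your proof is correct and takes essentially the same route as the paper, which simply asserts that the $Q_l$-equivariance relation and equation \eqref{E:inter} are analytic in $\bsn$ and hence propagate from the open convergence region to all of $\mathcal{Z}$ by analytic continuation. You have merely filled in the details the authors left implicit — holomorphy of $\bsn\mapsto T_\bsn f(g)$ (via the entirety of the Riesz family and the remark following Definition~\ref{D:res} in Appendix~\ref{A1}), connectedness of $\mathcal{Z}$, and nonemptiness of the overlap with the convergence region — and then invoked the identity theorem; this is the argument the paper intends.
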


\begin{proof}
The equation describing the $Q_l$-equivariance as well as eq.~\eqref{E:inter} 
are analytic in the parameter
$\bsn$. Hence they hold by analytic continuation since they hold on the open set 
where $\Re(-(i\bsn+\bsr'_l))>0$.
\end{proof}


We now recall, in order to fix the notations, the 
definition of the spherical Fourier transform on $\Ol$. 
If $f$ is a continuous function with compact support on 
$\Omega^{(l)}$ which is $K^{(l)}$-invariant, its spherical 
Fourier transform is
$$\widehat{f}(\bsn)=\int_{\ol}{f(x)
\Pl_{-\bsn+\bsr^{(l)}}(x)\dxl},$$
where $\bsn\in ({\fa_l}_{\C})^*$ and 
$\bsr^{(l)}=\frac{d}{4}\sum_{j=1}^{l}{(2j-l-1)\delta_j}$. 
Since $f$ has compact support, the function $\widehat f$ is 
holomorphic on $({\fa_l}_{\C})^*$. For latter use we also 
recall the inversion formula for $f\in 
C_0^{\infty}(\Omega^{(l)})^{K^{(l)}}$ and $\bsl\in\fa_l^*$ 
(cf. \cite[Theorem XIV.5.3]{FAKO} and \cite[Ch. III, Theorem 7.4]{HEL}):
\begin{equation}\label{E:if}  
f(x)=c^{(l)}_0\int_{\fa_l^*}{\widehat{f}(i\bsl)
\Phi^{(l)}_{i\bsl+\bsr^{(l)}}(x)
\frac{d\bsl}{\nm{c^{(l)}(\bsl)}^2}},
\end{equation}
where $d\bsl$ is the Lebesgue measure on $\fa_l^*\simeq\R^l$, 
$c^{(l)}(\bsl)$ is Harish Chandra's $c$-function for 
$\Omega^{(l)}$, and $c^{(l)}_0$ is a positive constant.

Now let $f\in\cpol^K$\!, and observe that 
$\Omega^{(l)}$, being a fibre of $\pol\ra\Pi_l$, is closed in 
$\pol$, and hence $f\mid_{\Omega^{(l)}}$ (sometimes still denoted by $f$) 
has compact support in 
$\Omega^{(l)}$. Moreover, since any $k\in K^{(l)}$ 
extends to an element of $K$, the function 
$f\mid_{\Omega^{(l)}}$ is $K^{(l)}$-invariant.
\begin{pro}
If $f\in C^\infty_0(\pol)$ is 
K-invariant then
\begin{equation}
T_\bsn f(g)=\gamma_{-(i\bsn+\bsr'_l)}^{(l)}\widehat{f}(i\bsn-\tfrac
{rd}{4})\Delta_{-(i\bsn+\bsr'_l)}(g^*e) 
\end{equation}
\end{pro}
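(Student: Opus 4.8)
The plan is to reduce everything to the spherical function identity of Theorem~\ref{T:En}. First I would take $f \in \cpol^K$ and, using the polar decomposition \eqref{E:pd} of the measure $\mu_l$, rewrite the integral defining $T_\bsn f(g)$ for $g=e$: since $f$ is $K$-invariant and $\Delta_{\bsn'}$ is integrated against $dk$ anyway, the $k$-integral over $\Pi_l = K.e_l$ collapses and one is left with an integral over $\Ol$ of the form $\int_{\Ol} f(x) \Dl^{\frac{rd}{2}}(x) \Phi_{\bsn'}(x)\, \dxl$, where $\bsn'=-(i\bsn+\bsr'_l)$ and $\Phi_{\bsn'}$ is the spherical function of $\Omega$ restricted to $\Ol$. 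Here I am working in the convergent range $\Re \bsn' \geq 0$; the general case follows by analytic continuation since both sides are holomorphic in $\bsn$ on $\mcZ$.

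Next I would apply Theorem~\ref{T:En} to replace $\Phi_{\bsn'}(x)$ by $\gnl[\bsn']\Phi^{(l)}_{\bsn'}(x)$ for $x \in \Ol$ — this is the key step, and the only place the rank-$l$ cone enters. (One must check the shift convention: when $\bsn'$ appears as an argument of an object attached to $V^{(l)}$ it is read as $(\nu'_1,\dots,\nu'_l)$, which is exactly how $\Phi^{(l)}_{\bsn'}$ is meant.) After this substitution the integral becomes $\gnl[\bsn']\int_{\Ol} f(x)\Dl^{\frac{rd}{2}}(x)\Phi^{(l)}_{\bsn'}(x)\,\dxl$. The task is then to recognise this as a spherical Fourier transform on $\Ol$. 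Comparing with the definition $\widehat f(\bsn)=\int_{\Ol} f(x)\Phi^{(l)}_{-\bsn+\bsr^{(l)}}(x)\,\dxl$, I would absorb the factor $\Dl^{\frac{rd}{2}}(x)$ by shifting the spectral parameter: since $\Dl^{s}(x)=\Delta_{(l)+s}$ acts on power functions by translating the weight by $s$ in every coordinate, one gets $\Dl^{\frac{rd}{2}}(x)\Phi^{(l)}_{\bsn'}(x)=\Phi^{(l)}_{\bsn'+\frac{rd}{2}}(x)$. Then matching $\bsn'+\frac{rd}{2} = -\bsn'' + \bsr^{(l)}$ and solving for $\bsn''$ — using $\bsr'_l = \frac{d}{2}\sum_{j=1}^l(r+l+1-2j)\delta_j$ and $\bsr^{(l)}=\frac{d}{4}\sum_{j=1}^l(2j-l-1)\delta_j$ — should produce exactly $\bsn'' = i\bsn - \frac{rd}{4}$, giving $T_\bsn f(e) = \gnl[\bsn']\,\widehat f(i\bsn-\tfrac{rd}{4})$.

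To pass from $g=e$ to general $g$, I would use the $Q_l$-equivariance already established: $T_\bsn f$ lies in $C(G,Q_l,1\otimes e^{i\bsn+\bsr'_l}\otimes1)$, and the function $g\mt \Delta_{-(i\bsn+\bsr'_l)}(g^*e)$ is (up to the scalar $T_\bsn f(e)$) the unique $K$-invariant vector in that space, by the Proposition preceding Theorem~4.8. Since $f$ is $K$-invariant and $T_\bsn$ intertwines $\pi^l$ and $\pi_\bsn$, the vector $T_\bsn f$ is itself $K$-invariant, hence proportional to $\Delta_{-(i\bsn+\bsr'_l)}((\cdot)^*e)$ with proportionality constant $T_\bsn f(e)$; evaluating at $e$ and using $e^*e=e$ fixes the constant. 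Combining the three steps yields the claimed formula.

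The main obstacle I anticipate is purely bookkeeping: getting the spectral-parameter shifts and the $\bsr$-conventions to line up so that the argument of $\widehat f$ comes out as $i\bsn-\frac{rd}{4}$ exactly, and confirming that the $\Gamma_{\Ol}$-quotient defining $\gnl$ is evaluated at $\bsn'=-(i\bsn+\bsr'_l)$ in the sense required by Theorem~\ref{T:En}. One should also double-check the range-of-validity argument for the analytic continuation: $\widehat f$ is entire in its parameter, $\gnl[\bsn']$ is meromorphic with poles precisely where $\Gamma_{\Ol}(-(i\bsn+\bsr'_l))$ has poles (i.e. off $\mcZ$), and $T_\bsn f$ was constructed to be holomorphic on $\mcZ$, so the identity extends from the convergent cone $\Re(-(i\bsn+\bsr'_l))>0$ to all of $\mcZ$ by uniqueness of analytic continuation.
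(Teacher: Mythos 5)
Your proposal is correct and follows essentially the same chain of steps as the paper: pass to the convergent range $\Re(-(i\bsn+\bsr'_l))\geq 0$, use the Arazy--Upmeier polar decomposition of $\mu_l$, apply Theorem~\ref{T:En} to replace $\Phi_{\bsn'}$ by $\gamma^{(l)}_{\bsn'}\Phi^{(l)}_{\bsn'}$ on $\Ol$, absorb $\Dl^{\frac{rd}{2}}$ into a parameter shift, and identify the result with $\widehat f(i\bsn-\tfrac{rd}{4})$ after using $\bsr'_l+\bsr^{(l)}=\tfrac{rd}{4}$, then extend by analytic continuation. The only (cosmetic) difference is how the $g$-dependence is obtained: you evaluate at $g=e$ and invoke $G=KQ_l$ to see that the left-$K$-invariant subspace of $C(G,Q_l,1\otimes e^{i\bsn+\bsr'_l}\otimes1)$ is one-dimensional, whereas the paper factors out $\Delta_{\bsn'}(g^*e)$ directly by writing $g^*=th$, $t\in NA$, $h\in K$, and using left invariance of Haar measure on $K$ inside the $K$-average.
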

\begin{proof}
Let us set $\bsn'=-(i\bsn+\bsr'_l)$ in the following.
Again, by analytic continuation, it suffices to prove the equality 
for $\Re(\bsn')\geq 0$.
Since $f$ and $\mu_l$ 
are $K$-invariant one has for all $k$ in $K$,
$$T_\bsn f(g)=\int_{\partial\Omega_l}{f(k^{-1}x)
\Delta_{{\boldsymbol\nu}'}(g^*x)\dmul(x)}
=\int_{\partial\Omega_l}{f(x)
\Delta_{{\boldsymbol\nu}'}(g^*kx)\dmul(x)}.$$
Hence
$$
T_\bsn f(g)=\int_K{T_\bsn f(g)dk}=\int_{\partial\Omega_l}{f(x)
\left(\int_K{\Delta_{{\boldsymbol\nu}'}(g^*kx)dk}
\right)\dmul(x)}.
$$
Writing $g^*=th$, $t\in NA$, $h\in K$, we have 
$$\Delta_{{\boldsymbol\nu}'}(thkx)=\Delta_{{
\boldsymbol\nu}'}(hkx)\Delta_{{
\boldsymbol\nu}'}(te)=\Delta_{{
\boldsymbol\nu}'}(hkx)\Delta_{{
\boldsymbol\nu}'}(g^*e),$$
and using the left invariance of the Haar measure of $K$,
\begin{align*}
\int_K{\Delta_{{\boldsymbol\nu}'}(g^*kx)dk}&=\int_K{\Delta_{{
\boldsymbol\nu}'}(hkx)dk}\Delta_{{
\boldsymbol\nu}'}(g^*e)\\
&=\int_K{\Delta_{{
\boldsymbol\nu}'}(kx)dk}\Delta_{{
\boldsymbol\nu}'}(g^*e)\\
&=\Phi_{\bsn'}(x)
\Delta_{{\boldsymbol\nu}'}(g^*e),
\end{align*}
hence,
$$T_\bsn f(g)=\int_{\Omega^{(l)}}{f(x)\Phi_{\bsn'}(x)}\dmul(x) 
\Delta_{{\boldsymbol\nu}'}(g^*e).$$
Now Upmeier and Arazy's polar decomposition \eqref{E:pd} for $\mu_l$ 
yields
$$T_\bsn f(g)=\int_{\Omega^{(l)}}{f(x)\Phi_{\bsn'}(x)
\Delta_{(l)}^{\frac{rd}{2}}(x)}\dxl 
\Delta_{{\boldsymbol\nu}'}(g^*e),$$
and by Theorem \ref{T:En},
\begin{align*}
T_\bsn f(g)=&\gnlp\int_{\Omega^{(l)}}{f(x)\Phi^{(l)}_{\bsn'}(x)
\Delta_{(l)}^{\tfrac{rd}{2}}(x)}
\dxl \Delta_{{\boldsymbol\nu}'}(g^*e)\\
&=\gnlp\int_{\Omega^{(l)}}{f(x)\Phi^{(l)}_{\bsn'+
\tfrac{rd}{2}}(x)}\dxl \Delta_{{\boldsymbol\nu}'}(g^*e)\\
&=\gamma_{\bsn'}^{(l)}\widehat{f}(-\bsn'+\bsrl-
\tfrac{rd}{2})\Delta_{\bsn'}(g^*e).
\end{align*} 
Since $-\bsr_l'+\bsr^{(l)}-\frac{rd}{2}=-\frac{rd}{4}$, we eventually get
$$T_\bsn f(g)=\gamma_{-(i\bsn+\bsr'_l)}^{(l)}\widehat{f}(i\bsn-
\tfrac{rd}{4})\Delta_{-(i\bsn+\bsr'_l)}(g^*e).$$
\end{proof}

\noindent In the following we set,
$$\ti f(\bsn)=\gamma_{-(i\bsn+\bsr'_l)}^{(l)}\widehat{f}(i\bsn-\tfrac
{rd}{4}),$$
and we note that it defines a meromorphic function whose 
poles are those 
of $\Gamma_\Omega(-(i\bsn+\bsr'_l)+\frac{ld}{2})$.
The following lemma will be used in the proof of the 
Plancherel formula.

\begin{lem}[Inversion formula]\label{L:if2} Let $f\in 
\cpol^K$and let $\bsl\in \fa_l^*$. Then 
for $x\in\Omega^{(l)}$,
\begin{equation*}
f(x)=c_0^{(l)}\int_{\fa_l^*}{\ti{f}(\bsl)
\Phi^{(l)}_{i\bsl+\bsr^{(l)}-\frac{rd}{4}}(x)
(\gamma^{(l)}_{-(i\bsl+\bsr_l' )})^{-1}
\frac{d\bsl}{\nm{c^{(l)}(\bsl)}^2}}.
\end{equation*}
\end{lem}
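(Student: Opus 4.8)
The plan is to derive this from the ordinary spherical inversion formula \eqref{E:if} on $\Omega^{(l)}$ by a direct change of variables. Recall that \eqref{E:if} reads
\[
f(x)=c^{(l)}_0\int_{\fa_l^*}{\widehat{f}(i\bsl)
\Phi^{(l)}_{i\bsl+\bsr^{(l)}}(x)
\frac{d\bsl}{\nm{c^{(l)}(\bsl)}^2}},
\]
valid for $f\in\cpol^K$ (restricted to $\Omega^{(l)}$, which is legitimate by the remarks preceding the previous proposition) and $x\in\Omega^{(l)}$. On the other hand, by definition $\ti f(\bsn)=\gamma^{(l)}_{-(i\bsn+\bsr'_l)}\widehat f(i\bsn-\tfrac{rd}{4})$, so that
\[
\widehat f(i\bsl)=\bigl(\gamma^{(l)}_{-(i\bsl+\bsr'_l)}\bigr)^{-1}\,\ti f(\bsl)
\]
whenever $i\bsl-\tfrac{rd}{4}=i\bsl$, i.e.\ whenever the shift is absorbed — so the key arithmetic point to check is simply the bookkeeping of spectral parameters. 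Concretely, one substitutes $\widehat f(i\bsl)=(\gamma^{(l)}_{-(i\bsl+\bsr'_l)})^{-1}\ti f(\bsl)$ into \eqref{E:if} and must identify the spherical function index $i\bsl+\bsr^{(l)}$ appearing there with the index $i\bsl+\bsr^{(l)}-\tfrac{rd}{4}$ written in the statement.

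The one genuine issue — and I expect this to be the main (indeed only) obstacle — is reconciling the argument of $\widehat f$ in $\ti f(\bsl)=\gamma^{(l)}_{-(i\bsl+\bsr_l')}\widehat f(i\bsl-\tfrac{rd}{4})$ with the variable $i\bsl$ over which one integrates in \eqref{E:if}. The resolution is that $\widehat f(i\bsl-\tfrac{rd}{4})$ means the spherical Fourier transform evaluated at the parameter $i\bsl$ together with the uniform shift $-\tfrac{rd}{4}$ in all $l$ coordinates; using the definition $\widehat f(\bsn)=\int_{\ol}f(x)\Pl_{-\bsn+\bsr^{(l)}}(x)\dxl$ one sees that shifting $\bsn$ by a real constant $c$ in every coordinate shifts the spherical-function index by $-c$, which is exactly the origin of the index $\bsr^{(l)}-\tfrac{rd}{4}$ in the statement. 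Thus, after substitution, \eqref{E:if} becomes precisely the claimed identity. I would therefore write: apply the substitution, perform the constant coordinate shift $\bsl\mapsto\bsl$ (no shift in $d\bsl$ since it is a translation of Lebesgue measure and the Plancherel density $\nm{c^{(l)}(\bsl)}^{-2}$ is written in the same variable), and collect terms.

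In more detail, the steps are: \textbf{(1)} restrict $f$ to $\Omega^{(l)}$, noting as before that this restriction lies in $C_0^\infty(\Omega^{(l)})^{K^{(l)}}$, so \eqref{E:if} applies; \textbf{(2)} invert the defining relation of $\ti f$ to get $\widehat f(i\bsl)=(\gamma^{(l)}_{-(i\bsl+\bsr_l')})^{-1}\ti f(\bsl)$, taking care that this holds for a.e.\ $\bsl$ (the exceptional set being the poles of $\gamma^{(l)}_{-(i\bsl+\bsr_l')}$, which is a null set and does not affect the integral); \textbf{(3)} substitute into \eqref{E:if}, obtaining
\[
f(x)=c^{(l)}_0\int_{\fa_l^*}{\ti f(\bsl)\,
\bigl(\gamma^{(l)}_{-(i\bsl+\bsr_l')}\bigr)^{-1}\,
\Phi^{(l)}_{i\bsl+\bsr^{(l)}}(x)\,
\frac{d\bsl}{\nm{c^{(l)}(\bsl)}^2}};
\]
\textbf{(4)} match this index $i\bsl+\bsr^{(l)}$ against the statement's $i\bsl+\bsr^{(l)}-\tfrac{rd}{4}$ — this is where one checks, from the definition $\widehat f(\bsn)=\int f\,\Pl_{-\bsn+\bsr^{(l)}}$, that the $-\tfrac{rd}{4}$ shift in the parameter slot of $\widehat f$ used inside $\ti f$ is precisely what converts $\Phi^{(l)}_{i\bsl+\bsr^{(l)}}$ into $\Phi^{(l)}_{i\bsl+\bsr^{(l)}-\frac{rd}{4}}$ when one rewrites everything consistently in terms of $\ti f(\bsl)$. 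Once this arithmetic identity of parameters is confirmed, the formula in the lemma is exactly what stands, and the proof is complete.
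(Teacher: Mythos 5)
Your proof has a genuine gap, and it is precisely at the point you flag as ``the main obstacle.'' The inversion of the defining relation in your step~(2) is wrong: from $\ti f(\bsl)=\gamma^{(l)}_{-(i\bsl+\bsr_l')}\widehat f\bigl(i\bsl-\tfrac{rd}{4}\bigr)$ one gets $\widehat f\bigl(i\bsl-\tfrac{rd}{4}\bigr)=(\gamma^{(l)}_{-(i\bsl+\bsr_l')})^{-1}\ti f(\bsl)$, \emph{not} $\widehat f(i\bsl)=(\gamma^{(l)}_{-(i\bsl+\bsr_l')})^{-1}\ti f(\bsl)$. Consequently the display you write in step~(3) does not follow from substituting into \eqref{E:if}: that formula requires $\widehat f(i\bsl)$, which is not the quantity you can express via $\ti f$. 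The ``shift $\bsl\mapsto\bsl$'' you invoke in step~(4) is the identity map and achieves nothing; the actual shift you would need is $i\bsl\mapsto i\bsl-\tfrac{rd}{4}$, a \emph{complex} translation of the contour $i\fa_l^*$, which cannot simply be discarded without a contour-deformation argument.

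The paper sidesteps this by not applying \eqref{E:if} to $f$, but rather to $f\Delta_{(l)}^{rd/4}$. The key observation --- which is the precise, nontrivial version of the ``absorption'' you gesture at --- is that, since $\Delta_{(l)}$ is $K^{(l)}$-invariant, one has the identity $\widehat{f\Delta_{(l)}^{rd/4}}(i\bsl)=\widehat f\bigl(i\bsl-\tfrac{rd}{4}\bigr)=(\gamma^{(l)}_{-(i\bsl+\bsr'_l)})^{-1}\ti f(\bsl)$. Then \eqref{E:if} applied to $f\Delta_{(l)}^{rd/4}$ reads
\begin{equation*}
f(x)\,\Delta_{(l)}^{\frac{rd}{4}}(x)
= c_0^{(l)}\int_{\fa_l^*}\ti f(\bsl)\,\bigl(\gamma^{(l)}_{-(i\bsl+\bsr_l')}\bigr)^{-1}\,
\Phi^{(l)}_{i\bsl+\bsr^{(l)}}(x)\,\frac{d\bsl}{\nm{c^{(l)}(\bsl)}^2},
\end{equation*}
and dividing through by $\Delta_{(l)}^{rd/4}(x)$, again using $K^{(l)}$-invariance to write $\Phi^{(l)}_{i\bsl+\bsr^{(l)}}(x)\,\Delta_{(l)}^{-\frac{rd}{4}}(x)=\Phi^{(l)}_{i\bsl+\bsr^{(l)}-\frac{rd}{4}}(x)$, gives the lemma. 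So the missing idea in your argument is to apply the Plancherel inversion to the multiplied function $f\Delta_{(l)}^{rd/4}$; with that in hand, the rest of your bookkeeping is essentially correct.
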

\begin{proof}
We have
\begin{equation}\label{E:pmir}
\widehat{f\Delta_{(l)}^{\frac{rd}{4}}}(i\bsl)=
\widehat{f}(i\bsl-\tfrac{rd}{4})=\ti{f}(\bsl)
(\gamma^{(l)}_{-(i\bsl+\bsr'_l )})^{-1} ,
\end{equation}
hence the inversion formula $\eqref{E:if}$ applied to the 
function $f\Delta_{(l)}^{\frac{rd}{4}}$ gives the desired 
formula. 
\end{proof}

We now state the main result of the article. Recall the notations from the end of section~\ref{S:sr}.
\begin{theo}[The Plancherel Theorem]\label{T:Pl}
Let $p$ be the measure on $\fa_l^*/W_l$ defined by 
$$dp(\bsl)=\frac{c_0^{(l)}}{\nm{\gamma^{(l)}_{-(i
\bsl+\bsr_l')
}c^{(l)}(\bsl)}^2}d\bsl.$$
Then there exists
an isomorphism of unitary representations
$$T:\left(\pi^{l},L^2(\pol)\right)\simeq\left(\int_{\fa_l^*}{\pi_
{
\bsl} dp(\bsl)},\int_{\fa_l^*}{\ch_{\bsl} dp(\bsl)}\right),$$
such that for every $f\in\cpol$, $(Tf)_\bsl=T_\bsl f$.
\end{theo}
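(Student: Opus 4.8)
The plan is to establish the theorem first on the subspace of $K$-invariant vectors, where $T$ reduces, up to a normalising factor, to the spherical Fourier transform of the rank-$l$ cone $\Ol$, and then to propagate it to all of $L^2(\pol)$ using that each $T_\bsl$ (for $\bsl\in\fa_l^*$, in the analytically continued sense of the previous propositions) intertwines $\pi^l$ with $\pi_\bsl$, together with the fact that the $K$-fixed vectors generate $L^2(\pol)$ under $G$.

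\smallskip

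\noindent\emph{Step 1 (the $K$-invariant case).} For $f\in\cpol^K$ the polar decomposition \eqref{E:pd} gives, the $K$-average of $|f|^2$ being trivial,
\[
\|f\|^2_{L^2(\mu_l)}=\int_{\Ol}|f(x)|^2\,\Dl^{\frac{rd}{2}}(x)\,\dxl=\bigl\|f\,\Dl^{\frac{rd}{4}}\bigr\|^2_{L^2(\Ol)}.
\]
By the proposition computing $T_\bsn f$ on $K$-invariant functions, $T_\bsl f=\ti f(\bsl)\,v_\bsl$, and $v_\bsl$ has norm one, so $\|T_\bsl f\|^2_{\ch_\bsl}=|\ti f(\bsl)|^2$. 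Inserting the relation \eqref{E:pmir}, $\ti f(\bsl)\,(\gamma^{(l)}_{-(i\bsl+\bsr'_l)})^{-1}=\widehat{f\Dl^{\frac{rd}{4}}}(i\bsl)$, into the $L^2$-form of the inversion formula \eqref{E:if}, that is, the Plancherel theorem for the spherical transform on $\Ol$ (\cite[Theorem XIV.5.3]{FAKO}), and using that the integrand is $W_l$-invariant (which follows from the explicit shape of $\bsr'_l$), we get
\[
\int_{\fa_l^*/W_l}\|T_\bsl f\|^2_{\ch_\bsl}\,dp(\bsl)=c_0^{(l)}\int_{\fa_l^*/W_l}\bigl|\widehat{f\Dl^{\frac{rd}{4}}}(i\bsl)\bigr|^2\,\frac{d\bsl}{|c^{(l)}(\bsl)|^2}=\|f\|^2_{L^2(\mu_l)}.
\]
Hence $T$ is isometric on $\cpol^K$; and by the Paley--Wiener theorem for the spherical transform on $\Ol$, the functions $\ti f$, $f\in\cpol^K$, are dense in $L^2(\fa_l^*/W_l,dp)$, so $T$ maps $\cpol^K$ onto a dense subspace of the $K$-fixed part $\int_{\fa_l^*/W_l}\C v_\bsl\,dp(\bsl)$ of the target.

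\smallskip

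\noindent\emph{Step 2 (reduction to a scalar identity).} Since each $T_\bsl$ intertwines $\pi^l$ with $\pi_\bsl$, for any finite combination $\phi=\sum_i c_i\,\pi^l(g_i)f_i$ with $f_i\in\cpol^K$ we have $T_\bsl\phi=\sum_i c_i\,\ti f_i(\bsl)\,\pi_\bsl(g_i)v_\bsl$, whence both $\|\phi\|^2_{L^2(\mu_l)}$ and $\int_{\fa_l^*/W_l}\|T_\bsl\phi\|^2_{\ch_\bsl}\,dp(\bsl)$ expand into double sums involving, respectively, the matrix coefficients $h\mapsto\langle\pi^l(h)f_i,f_j\rangle_{L^2(\mu_l)}$ and $h\mapsto\langle\pi_\bsl(h)v_\bsl,v_\bsl\rangle_{\ch_\bsl}$, the latter being the positive-definite spherical function $\Phi_{i\bsl+\boldsymbol\eta_l+\bsr}$ computed at the end of Section~\ref{S:sr}. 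Thus $T$ is isometric on the $\pi^l(G)$-linear span of $\cpol^K$ provided the scalar identity
\[
\langle\pi^l(h)f_1,f_2\rangle_{L^2(\mu_l)}=\int_{\fa_l^*/W_l}\ti f_1(\bsl)\,\overline{\ti f_2(\bsl)}\,\langle\pi_\bsl(h)v_\bsl,v_\bsl\rangle_{\ch_\bsl}\,dp(\bsl)\qquad(\ast)
\]
holds for all $h\in G$ and $f_1,f_2\in\cpol^K$. Identity $(\ast)$ is proved by rerunning the computation of the $K$-invariant proposition with $h$ in place: averaging over $K$ and using the $K$-invariance of $f_2$ and $\mu_l$ rewrites the left side as $\Delta^{\frac{ld}{4}}(h)\int_{\pol}\bigl(\int_K f_1(h^*kx)\,dk\bigr)\overline{f_2(x)}\,d\mu_l(x)$; the polar decomposition \eqref{E:pd} reduces the integral to $\Ol$; expanding $f_1$ there by the inversion formula of Lemma~\ref{L:if2}, evaluating the ensuing $K$-averages of power functions via $\int_K\Delta_{\bsn'}(h^*kx)\,dk=\Phi_{\bsn'}(x)\,\Delta_{\bsn'}(h^*e)$ (with $\bsn'=-(i\bsl+\bsr'_l)$) and Theorem~\ref{T:En}, and collecting terms reproduces the right side of $(\ast)$, the spherical function $\langle\pi_\bsl(h)v_\bsl,v_\bsl\rangle_{\ch_\bsl}$ being assembled from the very same data.

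\smallskip

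\noindent\emph{Step 3 (conclusion).} The $\pi^l(G)$-linear span of $\cpol^K$ is dense in $L^2(\pol)$: its closure is a $G$-invariant subspace containing $\cpol^K$, hence containing $L^2(\pol)^K$, and since $L^2(\pol)$ is generated under $G$ by its $K$-fixed vectors it is all of $L^2(\pol)$. By Step 2 the operator $T$ therefore extends to an isometry $T\colon L^2(\pol)\ra\int_{\fa_l^*/W_l}\ch_\bsl\,dp(\bsl)$, it intertwines $\pi^l$ with $\int_{\fa_l^*/W_l}\pi_\bsl\,dp(\bsl)$ by continuity from \eqref{E:inter}, and its image is closed, $G$-invariant and contains the field $\bsl\mapsto v_\bsl$, which is cyclic in the direct integral because each $\pi_\bsl$ is irreducible and the $\pi_\bsl$ are pairwise inequivalent over $\fa_l^*/W_l$ (end of Section~\ref{S:sr}); hence $T$ is onto, i.e.\ the asserted unitary equivalence, and $(Tf)_\bsl=T_\bsl f$ for $f\in\cpol$ by construction. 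The main obstacle is Step 2 --- the scalar Plancherel identity $(\ast)$, combining Theorem~\ref{T:En}, the inversion formula, the spherical-function computation and the polar decomposition without slips --- and, alongside it, the cyclicity input used in Step 3.
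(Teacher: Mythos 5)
Your overall architecture --- prove the $K$-invariant Plancherel identity, propagate it to the $\pi^l(G)$-span of $\cpol^K$, then argue surjectivity --- is sound and matches the spirit of the paper's proof, which also rests on the $K$-invariant identity \eqref{E:pfk} and the intertwining property of $T_\bsl$. The route differs in two places: the paper takes the dense subspace $\{\pi^l(\ph)\xi_n\}$ and reduces to the $K$-invariant case by projecting $\ph^*\ast\ph$ into the commutative algebra $L^1(G)^\#$, and it proves surjectivity via Stone--Weierstrass density of Gelfand transforms plus fibrewise cyclicity; you instead reduce to an explicit scalar identity $(\ast)$ for matrix coefficients and argue surjectivity directly from irreducibility/inequivalence. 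Both routes are viable; yours is slightly more direct, the paper's avoids having to state $(\ast)$.

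The one genuine weak spot is your claimed proof of $(\ast)$ by ``rerunning the computation.'' After the polar decomposition the expression is $\int_{\Ol}\Dl^{\frac{rd}{2}}(x)\overline{f_2(x)}\bigl(\int_K f_1(h^*kx)\,dk\bigr)d_*^{(l)}x$, and $f_1$ is being evaluated at points $h^*kx\in\pol$ that in general lie \emph{outside} $\Ol$, so Lemma~\ref{L:if2} (which expands $f_1$ only on $\Ol$) does not apply to the argument of $f_1$ as you use it. Moreover, the two ingredients you invoke --- the formula $\int_K\Delta_{\bsn'}(h^*kx)\,dk=\Phi_{\bsn'}(x)\Delta_{\bsn'}(h^*e)$ and Theorem~\ref{T:En} --- are established in the paper only for $\Re\bsn'\geq0$, whereas at the point of interest $\bsn'=-(i\bsl+\bsr_l')$ with $\bsl\in\fa_l^*$ has $\Re\bsn'=-\bsr_l'<0$; so they cannot be applied verbatim, and the chain you sketch does not close without an analytic-continuation step that you have not supplied. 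The identity $(\ast)$ is nonetheless true and follows cleanly from what is already available: the intertwining $T_\bsl\circ\pi^l(kh)=\pi_\bsl(kh)\circ T_\bsl$ together with $T_\bsl f_1=\ti f_1(\bsl)v_\bsl$ gives, after averaging over $k\in K$ and using $\dim\ch_\bsl^K=1$ (Cartan--Helgason), that the $K$-projection $P\pi^l(h)f_1\in\cpol^K$ satisfies $T_\bsl\bigl(P\pi^l(h)f_1\bigr)=\ti f_1(\bsl)\langle\pi_\bsl(h)v_\bsl,v_\bsl\rangle_\bsl\,v_\bsl$; polarising \eqref{E:pfk} and applying it to $P\pi^l(h)f_1$ and $f_2$ yields $(\ast)$. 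This is, in disguise, exactly the paper's $L^1(G)^\#$-projection device, so the cost of choosing your more explicit route is that the computation you proposed for $(\ast)$ should be replaced by this shorter argument.
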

\begin{proof}
First we prove that for any $K$-invariant function $f$ in 
$\cpol$,
\begin{equation}\label{E:pfk}
\int_{\pol}{\nm{f(x)}^2\dmul(x)}=\int_{\fa_l^*}
{\nm{\ti{f}(\bsl)}^2dp(\bsl)}.
\end{equation}
For this purpose we use the polar decomposition for $\mu_l$ and the 
inversion formula of Lemma \ref{L:if2}. Then
\begin{align*}
&\int_{\pol}{\nm{f(x)}^2\dmul(x)}=\int_{\Omega^{(l)}}{\nm
{f(x)}^2\Dl^{\frac{rd}{2}}(x)\dxl}\\ 
&=\int_{\Omega^{(l)}}{f(x)\Dl^{\frac{rd}{2}}(x)c_0^{(l)}\int_
{\fa_l^*}{\ov{\ti{f}(\bsl)}
{\Phi^{(l)}_{i\bsl+\bsr^{(l)}-\frac{rd}{4}}(x)}
{\ov{(\gamma^{(l)}_{-(i\bsl+\bsr_l' )})}}{}^{-1}
\frac{d\bsl}{\nm{c^{(l)}(\bsl)}^2}}\dxl}\\
&=\int_{\fa_l^*}{\ov{\ti{f}(\bsl)
(\gamma^{(l)}_{-(i\bsl+\bsr_l' )})}{}^{-1}
\left(\int_{\Omega^{(l)}}{f(x)\Dl^{\frac{rd}{4}}(x)\Phi^
{(l)}_{-i\bsl+\bsr^{(l)}}(x)\dxl}\right)
\frac{c_0^{(l)}d\bsl}{\nm{c^{(l)}(\bsl)}^2}}\\
&=\int_{\fa_l^*}{\ov{\ti{f}(\bsl)
(\gamma^{(l)}_{-(i\bsl+\bsr_l' )})}{}^{-1}
\widehat{f\Delta_{(l)}^{\frac{rd}{4}}}(i\bsl)
\frac{c_0^{(l)}d\bsl}{\nm{c^{(l)}(\bsl)}^2}}\\
&=\int_{\fa_l^*}{\nm{\ti{f}
(\bsl)}^2\frac{c_0^{(l)}d\bsl}{\nm{\gamma^{(l)}
_{-(i\bsl+\bsr_l')}c(\bsl)}^2}}.
\end{align*}
In the last equality we have used again the formula 
\eqref{E:pmir}.

The next step is to prove that for a dense subset of 
functions $f$ in $\cpol$, the identity  
\begin{equation*}
\int_{\pol}{\nm{f(x)}^2\dmul(x)}=\int_{\fa_l^*}
{\nm{T_\bsl f}_\bsl^2dp(\bsl)}
\end{equation*}
holds.

Recall that $L^1(G)$ is a Banach $*$-algebra when equipped 
with convolution
as multiplication, and $\ph^*(g):=\overline{\ph(g^{-1})}$.
Let $L^1(G)^{\#}$ denote the (commutative) closed subalgebra 
of left and right $K$
-invariant
functions in $L^1(G)^{\#}$. There is a natural projection 
$L^1(G) \rightarrow
L^1(G)^{\#}$,
\begin{equation*}
\ph \mapsto \ph^{\#}:=\int_K\int_K \ph(k_1^{-1} \cdot k_2)dk_1dk_2.
\end{equation*}
For a unitary representation $(\tau, \mathscr{H})$ of
$G$, there is a $*$-representation (also denoted by $\tau$) of
$L^1(G)$ on $\mathscr{H}$ given by
\begin{equation*}
\tau(\ph)v:=\int_G \ph(g)\tau(g)vdg, \quad v\in \mathscr{H}.
\end{equation*}
The representations of $K$ and $L^1(G)$ are related by
\begin{equation}
\tau(k_1) \tau(\ph)\tau(k_2)=\tau(\ph(k_1^{-1} \cdot k_2^{-1})), \quad 
\ph \in L^1(G), \quad k_1, k_2 \in K.
\label{E:kalg}
\end{equation}
The subspace $\mathscr{H}^K$ of $K$-invariants is invariant
under $L^1(G)^{\#}$.
From \eqref{E:kalg}, it follows that for any $\ph \in L^1(G)$, and 
$u,v \in \mathscr{H}^K$,
\begin{equation}
\langle \tau(\ph)u,v\rangle=\langle \tau(\ph^{\#})u,v\rangle. \label{E:projkinv}
\end{equation}

Let $\xi$ be the $K$-invariant cyclic vector in $L^2(\pol)$. 
We claim that 
there exists a sequence $\{\xi_n\}_{n=1}^{\infty} \subseteq 
\cpol^K$, such that
$\xi_n \rightarrow \xi$ in $L^2(\pol)$.
To see this, we can first choose a sequence 
$\{\zeta_n\}_{n=1}^{\infty} \subseteq \cpol$ that converges to
$\xi$. Next, observe that the orthogonal projection 
$P:L^2(\pol) \rightarrow L^2(\pol)$ is given by
$f \mapsto \int_K f(k^{-1} \cdot)dk$. Then $P(f)$ is smooth 
if $f$ 
is smooth. Moreover, supp $f$ is contained
in the image of the map $K \times \mbox{supp}\,f \rightarrow 
\pol$, 
$(k,x) \rightarrow kx$. It follows that $P(\cpol) \subseteq 
\cpol^K$.
Hence, the claim holds with $\xi_n:=P(\zeta_n)$.
The subspace $$\mathscr{H}_0:=\{\pi^l(f)\xi_n \mid f \in 
C^{\infty}_0(G), n \in \N\}$$
is then dense in $L^2(\pol)$.
For $\ph \in C^{\infty}_0(G), n \in \N$, we have, by 
\eqref{E:projkinv} and \eqref{E:pfk}, 
\begin{align*}
\langle \pi^l(\ph)\xi_n, \pi^l(\ph)\xi_n \rangle_{L^2(\pol)}
&=\spl{\pi^l(\ph^**\ph)\xi_n,\xi_n}\\
&=\spl{\pi^l((\ph^**\ph)^{\#})\xi_n,\xi_n}\\
&=\int_{\fa_l^*}{\langle T_
\bsl(\pi^l((\ph^**\ph)^{\#})\xi_n), 
T_\bsl(\xi_n) \rangle_\bsl dp(\bsl)}\\
&=\int_{\fa_l^*}{\langle \pi_\bsl((\ph^**\ph)^{\#})T_
\bsl(\xi_n), 
T_\bsl(\xi_n) \rangle_\bsl dp(\bsl)}\\
&=\int_{\fa_l^*}{\langle \pi_\bsl(\ph^**\ph)T_\bsl(\xi_n), T_
\bsl(\xi_n) \rangle_\bsl dp(\bsl)}\\
&=\int_{\fa_l^*}{\langle \pi_\bsl(\ph)T_\bsl(\xi_n), 
\pi(\ph)T_\bsl(\xi_n) 
\rangle_\bsl dp(\bsl)}\\
&=\int_{\fa_l^*}{\langle T_\bsl(\pi^l(\ph)\xi_n), T_
\bsl(\pi^l(\ph)\xi_n)
\rangle_\bsl dp(\bsl)}.
\end{align*}
Hence, the operator $T$ defined on $\mathscr{H}_0$ by 
$T(\pi^l(\ph)\xi_n)=(\pi_\bsl(\ph)T_\bsl(\xi_n))_\bsl$
extends uniquely to a $G$-equivariant isometric operator
$$T:L^2(\pol) \rightarrow \int_{\fa_l^*}\ch_{\bsl} 
dp(\bsl).$$

It now only remains to prove the surjectivity of $T$.  
Assume therefore that $(\eta_\bsl)_\bsl$ is orthogonal to 
the image of 
$T$. Then for all $\ph$ in 
$L^1(G)$ and $h\in L^1(G)^\#$,
$$\int_{\fa_l^*}{\langle\pi_\bsl(\ph*h)(T\xi)_\bsl,\eta_\bsl
\rangle_\bsl dp(\bsl)}=0,$$
i.e.,
$$\int_{\fa_l^*}{\check{h}(\bsl)\langle\pi_\bsl(\ph)(T\xi)_\bsl,
\eta_\bsl
\rangle_\bsl dp(\bsl)}=0,$$
where $\check{h}(\bsl)$ is the Gelfand transform 
of $h$ restricted to $\fa_l^*$. 
Recall that the set of bounded spherical functions
can be identified with the character space of $L^1(G)^\#$, 
and hence the image of $L^1(G)^\#$ under the Gelfand 
transform separates points in this space. 
It thus follows from the Stone-Weierstrass Theorem that the 
functions $\check{h}$ are dense in the space of continuous 
functions on $\fa_l^*$ that are invariant under the 
action of $W_l$..     
Hence $\langle\pi_\bsl(f)(T\xi)_\bsl,
\eta_\bsl\rangle_\bsl=0$ $p$-almost everywhere. By 
separability of $L^1(G)$, there is a set $U$ with 
$p(\fa_l^*\setminus U)=0$ such that for all $f$ in $L^1(G)$ 
and $\bsl\in U$,
$\langle\pi_\bsl(f)(T\xi)_\bsl,
\eta_\bsl\rangle_\bsl=0$. By cyclicity of $(T\xi)_\bsl$ 
(note that $(T\xi)_\bsl$ is non-zero $p$-almost everywhere), 
$\eta_\bsl$ is zero $p$-almost everywhere.

\end{proof}

\begin{rem} We want to point out that it is actually not 
necessary to prove the 
analytic continuation of $T_\bsn$ (and hence to use the 
theory of Riesz distributions) to derive the decomposition 
of $\pi^l$ (however, the natural operator $T$ above
is then replaced by an abstract one). Indeed, by the 
Cartan-Helgason theorem (\cite[Ch. 
III, Lemma 
3.6]{HEL}) we have
$\ch_\bsl^K=\C\, v_\bsl$ when $\bsl\in\fa_l^*$, and hence
we can set
$$
T_\bsl:\cpol^{K}\ra\ch_\bsl^{K},\quad
f\mt\ti{f}(\bsl)v_\bsl,$$
and by \eqref{E:pfk} we thus obtain an operator 
$T:L^2(\pol)^K\ra\int_{\fa_l^*}{\ch_\bsl^K dp(\bsl)}$.
Assume that we can prove that $T$ intertwines the 
actions of $\cg$. Then for $\ph\in\cg$,
\begin{align*}
\langle \pi^l(\ph)\xi,\xi\rangle&=\langle T\pi^l(\ph)
\xi,T\xi\rangle
=\langle \pi(\ph)
T\xi,T\xi\rangle\\
&=\int_{\fa_l^*}{\langle \pi_\bsl(\ph)
(T\xi)_\bsl,(T\xi)_\bsl\rangle_\bsl}dp(\bsl)\\
&=\int_{\fa_l^*}{\check{\ph}(\bsl)\nm{(T\xi)_\bsl}^2_\bsl}dp(
\bsl),
\end{align*}
where $\check\ph$ is defined by 
$\pi_{\bsl}(\ph)v_\bsl=\check{\ph}(\bsl)v_\bsl$. The proof of 
\cite[Theorem 10]{SEP2} shows that the decomposition of 
$\pi^l$ then follows.
We now prove the intertwining property. It is equivalent to 
the 
equality 
\begin{equation}\label{E:tbp}
\ti{\pi^{l}(\ph)f}(\bsl)=\ti{f}(\bsl)\check{\ph}(\bsl).
\end{equation}
Let $\bsn\in\C^l$. Then for 
$f\in\cpol$ and $\ph\in\cg$ we have, where 
$\bsn'=-(i\bsn+\bsr'_l)$, 
\begin{align*}
\pi_{\bsn}(\ph)&\left(\Delta_{\bsn'}((
\cdot)^*e)\otimes1\right)(g)=\int_{G}{\ph(h)\Dnp(g^*h^{-*}e)
\otimes\Delta^{-\frac{ld}{4}}(he)dh}\\
&=\int_{G}{\ph(h)\Delta^{-\frac{ld}{4}}(he)\left(\int_{K}{
\Dnp(g^*kh^{-*}e)dk}\right)\otimes1dh}\\
&=\int_{G}{\ph(h)\Delta^{-\frac{ld}{4}}(he)\left(\int_{K}{
\Dnp(kh^{-*}e)dk}\right)dh}\left(\Dnp(g^*e)\otimes1\right)\\
&=\int_{G}{\ph(h)\Delta^{-\frac{ld}{4}}(he)\Phi_{\bsn'}(h^{-*}e)dh}
\left(\Dnp(g^*e)\otimes1\right),
\end{align*}
i.e.,
\begin{equation*}
\pi_{\bsn}(\ph)v_\bsn=\check{\ph}(\bsn)v_\bsn,
\end{equation*}
where $\check{\ph}(\bsn)$ is holomorphic on $\C^l$.
If $\Re(-(i\bsn+\bsr'_l))\geq0$, then, by 
Lemma~\ref{L:inter}, 
the operator $T_\bsn$ intertwines the actions of $\cg$,  
and hence
$$\ti{\pi^{l}(\ph)f}(\bsn)=\ti{f}(\bsn)\check{\ph}(\bsn).$$
Thus \eqref{E:tbp} follows by analytic continuation.
\end{rem}

\appendix

\section{Restrictions of distributions}\label{A1}
Let $X$ be a smooth $n$-dimensional manifold. Let $\mathscr{D}(X)$ 
denote the space of compactly supported smooth functions on $X$, 
i.e., the test functions on $X$.
For any chart 
$(V, \phi)$, compact subset $K \subseteq \widetilde{V}:=\phi(V)$, 
and $N \in \mathbb{N}$, consider the seminorm
\begin{equation}
p_{V,K,N}(f):=\sum_{\alpha \in \mathbb{N}^n, |\alpha| \leq N} 
\mbox{sup}_{x \in K} |D^{\alpha}(f \circ \phi^{-1})(x)|  \label{seminorm}
\end{equation}
on the space of smooth functions on $X$. Here 
$D^{\alpha}:=\frac{\partial^{|\alpha|}}{\partial x_1^{\alpha_1} \cdots
\partial x_n^{\alpha_n}}$ for 
$\alpha=(\alpha_1, \ldots, \alpha_n)$, and 
$|\alpha|=|\alpha_1|+\cdots+|\alpha_n|$.  
For a compact set $K \subseteq X$, let $\mathscr{D}(K)$ be
the space of smooth functions with support in $K$ equipped with the 
topology induced by the above seminorms. 
We recall that a distribution on $X$ is a continuous functional on 
$\mathscr{D}(X)=\cup_K \mathscr{D}(K)$ equipped with the inductive 
limit topology. We let $\mathscr{D}'(X)$ denote the space
of distributions on $X$.
Let $\{U_i\}$ be an open cover of $X$. 
Then a linear functional on $\mathscr{D}(X)$ is continuous if and
only if its restriction to every $\mathscr{D}(U_i)$ is continuous.

We will now construct restrictions to a closed submanifold  $Y$ of distributions that have support on $Y$.
To have a well-defined notion of restriction, one can not permit arbitrary extensions to $X$ of test functions on $Y$.
Instead, we will require the extension to be locally constant along some predescribed direction.
This can be made precise using tubular neighbourhoods. 

\begin{defin}
Let $X$ be a smooth $n$-dimensional manifold, and let $Y$ be a 
$k$-dimensional submanifold. A \emph{tubular neighbourhood}
of $Y$ in $X$ consists of a smooth vector bundle $\pi: E \rightarrow Y$, 
an open neighbourhood $Z$ of the image, $\zeta_E(Y)$, of the zero section 
in $E$, and a diffeomorphism $f:Z \rightarrow O$ onto an open 
set $O \subseteq X$ containing $Y$, such that the
diagram
\begin{eqnarray*}
\xymatrix{Z \ar[rd]^{f} &\\
Y \ar[u]^{\zeta_E}  \ar[r]^{j}  & X} 
\end{eqnarray*}
commutes. Here $j:Y \rightarrow X$ is the inclusion map.
\end{defin}

\begin{rem} \label{normalchoice}
Any closed submanifold $Y$ of $X$ admits a tubular neighbourhood (cf. \cite[Ch. IV, F, Thm. 9]{LAN}). Any splitting of the tangent bundle
of $X$ over $Y$, $T(X)\mid_Y=T(Y) \oplus E$, gives such a vector bundle
$E$. In particular, given a Riemannian metric on $X$, $E$ can be 
chosen as the orthogonal complement to $T(Y)$ in $T(X)\mid_Y$.
\end{rem}

Since the concepts we are dealing with are of a local nature
we can without loss of generality assume that $X=Z$ itself is a tubular
neighbourhood of $Y$.

\begin{defin}\label{D:lvc}
A function $f$ on $X$ is said to be \emph{locally vertically constant} around $Y$, l.v.c., if for any $x\in Y$, there exists
an open neighbourhood $W_x$ of $x$ in $X$ such that for $y\in W_x$, $f(y)=f(\pi(y))$. Moreover, if $g$ is a function on $Y$, and $f\mid_Y=g$, 
$f$ is called an l.v.c. extension of $g$.
\end{defin}

\begin{lem}\hfill \label{L:lvcext}
\begin{enumerate}[(i)]
\item Any test function $\ph$ on $Y$ admits a l.v.c extension $\ti\ph\in\mathscr{D}(X)$.
\item An l.v.c. function $f$ that vanishes on $Y$ vanishes on some neighbourhood of $Y$.
\end{enumerate} 
\end{lem}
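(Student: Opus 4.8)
The statement has two parts, and both are essentially local, so throughout I would work in the model situation from Remark~\ref{normalchoice}, identifying $X$ with an open neighbourhood $Z$ of the zero section in a vector bundle $\pi\colon E\to Y$, with $Y$ embedded as the zero section. For part (i), I would first handle the problem locally over a trivialising chart: if $U\subseteq Y$ is open with $E\mid_U\cong U\times\R^{n-k}$, then given $\ph\in\mathscr{D}(Y)$ I simply define $\ti\ph(u,v)=\ph(u)$ for $(u,v)\in (U\times\R^{n-k})\cap Z$, which is manifestly l.v.c.\ on that piece and restricts to $\ph$ on $U$. The only work is to globalise while keeping compact support: cover $\supp\ph$ by finitely many such trivialising opens $U_1,\dots,U_m$, pick a subordinate partition of unity $\{\chi_i\}$ on $Y$ with $\sum\chi_i=1$ near $\supp\ph$, form the local l.v.c.\ extensions $\widetilde{\chi_i\ph}$ over $\pi^{-1}(U_i)\cap Z$ (extended by zero outside), and set $\ti\ph=\sum_i \widetilde{\chi_i\ph}$. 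Each summand is l.v.c.\ around points of $Y$ in $U_i$ and vanishes near points of $Y$ outside $\supp\chi_i$, so the sum is l.v.c.\ around every point of $Y$; it restricts to $\sum_i\chi_i\ph=\ph$ on $Y$; and its support is contained in $\pi^{-1}(\supp\ph)\cap Z$ intersected with a slightly shrunk tube, which can be arranged to be compact by choosing $Z$ with fibres of bounded size over the compact set $\supp\ph$ (e.g.\ multiply by a cutoff in the fibre direction that is $\equiv 1$ on a neighbourhood of the zero section over $\supp\ph$). Smoothness is clear since each piece is smooth and the cutoffs are smooth.

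\textbf{Part (ii).} Suppose $f$ is l.v.c.\ around $Y$ and $f\mid_Y=0$. By Definition~\ref{D:lvc}, for each $x\in Y$ there is an open $W_x\ni x$ in $X$ with $f(y)=f(\pi(y))$ for all $y\in W_x$. Shrinking if necessary we may assume $W_x\subseteq\pi^{-1}(W_x\cap Y)$, or more carefully that $\pi(W_x)\subseteq Y$ is contained in a set on which $f$ vanishes — but in fact $\pi(y)\in Y$ always, so $f(\pi(y))=0$ by hypothesis, hence $f\equiv 0$ on $W_x$. Then $W:=\bigcup_{x\in Y}W_x$ is an open neighbourhood of $Y$ on which $f$ vanishes identically. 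This part is immediate once the definitions are unwound; there is essentially nothing to prove beyond observing that the vertical base point $\pi(y)$ lies in $Y$.

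\textbf{Main obstacle.} Neither part is deep; the only genuinely fiddly point is the compact-support bookkeeping in (i) — ensuring that the finite sum of locally defined extensions is both globally well defined (vanishing identically near the parts of $Y$ it should not see) and compactly supported inside the chosen tube $Z$, rather than spreading out along the fibres. I would address this by fixing at the outset a tubular neighbourhood whose fibre radius over the compact set $\supp\ph$ is bounded, and by inserting a smooth fibrewise cutoff equal to $1$ on a smaller tube over $\supp\ph$ so that the extension genuinely has compact support in $X=Z$ while remaining l.v.c.\ on a neighbourhood of $Y$ (the cutoff is constant, hence harmless, near the zero section). The partition-of-unity argument then glues the trivial local extensions without any compatibility conditions, since l.v.c.\ is a purely pointwise-along-fibres notion and is preserved under multiplication by functions pulled back from $Y$ and under locally finite sums.
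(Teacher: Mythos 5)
Your proof is correct and follows essentially the same route as the paper: cover $\supp\ph$ by finitely many trivialising charts, use a partition of unity on the base subordinate to that cover, extend each piece l.v.c.\ by multiplying with a fibre cutoff that is $1$ near the zero section, and sum; for (ii), both arguments amount to noting that $f(y)=f(\pi(y))=0$ on the union of the neighbourhoods $W_x$.
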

\begin{proof}
Since $\supp\ph$ is compact, it can be covered by finitely many open neighbourhoods $O_1,\dots,O_N$ diffeomorphic to products $U_i\times V_i\subset 
\R^k\oplus\R^{n-k}$, where $U_i$ is open in $\R^k$ and $V_i$ is an open neighbourhood of
$0$ in $\R^{n-k}$, in such a way that $\pi$ corresponds to the projection onto the first coordinate. Let $(\psi_i)$ be a smooth partition of unity on 
$U=\cup_{i=1}^N\pi(O_i)$ subordinate to the cover $\pi(O_i)$, $i=1,\dots,N$, and let $\ph_i=\ph\psi_i$. Then each $\ph_i$ can be identified with a
test function on $U_i$, and by multiplying this with a test function on $V_i$ which is $1$ on 
some neighbourhood of $0$, we obtain a l.v.c. extension $\ti\ph_i$ of $\ph_i$. Then $\ti\ph=\sum{\ti\ph_i}$ is a l.v.c extension of $\ph$. This proves (i).
For (ii) just observe that if $f$ is an l.v.c function that vanishes on $Y$, then $Y$ is in the complement of the support of $f$.
\end{proof}

\noindent Assume now that $u \in \mathscr{D}'(X)$  has support on the submanifold
$Y$. Then l.v.c. test functions that vanish on a neighbourhood of $Y$ are in the kernel of $u$,
and the preceding lemma enables us to make the following definition.

\begin{defin}\label{D:res}
Let $u \in \mathscr{D}'(X)$ be a distribution with support
on $Y$. The vertical restriction, $u\!\mid_Y$, of $u$ to $Y$ is the unique distribution on $Y$ that satisfies
\begin{equation*}
u\mid_Y(\ph \mid_Y)
=u(\ph), \label{globalrestr}
\end{equation*}
for any $\ph \in \mathscr{D}(X)$ which is l.v.c. around $Y$.
\end{defin}

To see that the functional $u\mid_Y$ really is a distribution on $Y$, it suffices to verify the continuity for test functions
with support in trivialising open sets. In this case the verification is straightforward using the l.v.c extension 
from the proof of Lemma~\ref{L:lvcext}.

\begin{rem}\label{R:choice}
Note that the vertical restriction depends on the choice 
of tubular neighbourhood, i.e., on
a choice of complement $E$ in the splitting of vector
bundles in Remark \ref{normalchoice}. However, when $u$ is a measure on $X$ with support on $Y$, 
the vertical restriction $u \mid _{Y}$ is $u$ itself, now
viewed as a distribution on $Y$.
\end{rem}

We now consider holomorphic families of distributions and
their properties under restriction.

\begin{defin}
Let $\Omega \subseteq\C^m$ be an open set, and let $\{u^z\}_{z \in \Omega}$ be
a family of distributions on the smooth manifold $X$. Then this family 
is called a \emph{holomorphic family of distributions} if the map
$z \mapsto u^z(\ph)$ is holomorphic on $\Omega$ for every
$\ph \in \mathscr{D}(X)$.
\end{defin}

\begin{rem} It follows immediately from Definition~\ref{D:res} that if 
$\{u^z\}_{z \in \Omega}$ is a holomorphic family of
distributions with support on $Y$, then the family $\{u^z \mid_Y\}_{z \in \Omega}$
is a holomorphic family of distributions on $Y$.
\end{rem}

\begin{pro}\label{P:invres} Let $\Omega\subseteq\C^n$ be open and connected, and
let $\{u^z\}_{z \in \Omega}$ be a holomorphic family of
distributions on $X$ with support on $Y$. Assume that
there exists an open subset $U \subseteq \Omega$, such that
$u^z$ is a measure with support on $Y$ for $z \in U$.
Then the whole family
$\{u^z\mid_Y\}_{z \in \Omega}$ is independent of
$E$.
\end{pro}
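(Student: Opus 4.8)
The plan is to reduce the assertion to the identity theorem for holomorphic functions. Fix two choices $E_1,E_2$ of complementary subbundle in a splitting $T(X)\mid_Y=T(Y)\oplus E_j$ over $Y$; by Remark~\ref{normalchoice} each gives a tubular neighbourhood of $Y$, hence a vertical restriction operator, and I will write $u^z\mid_Y^{(1)}$ and $u^z\mid_Y^{(2)}$ for the two resulting restrictions of $u^z$. By the remark preceding the proposition, both $\{u^z\mid_Y^{(1)}\}_{z\in\Omega}$ and $\{u^z\mid_Y^{(2)}\}_{z\in\Omega}$ are holomorphic families of distributions on $Y$; so for each fixed test function $\ph\in\mathscr{D}(Y)$ the two functions $F_j\colon z\mt u^z\mid_Y^{(j)}(\ph)$, $j=1,2$, are holomorphic on $\Omega$.

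First I would observe that the two restrictions coincide on the open set $U$. Indeed, for $z\in U$ the distribution $u^z$ is a measure with support on $Y$, and by Remark~\ref{R:choice} the vertical restriction of such a measure is the measure itself, regarded as a distribution on $Y$, independently of the complement $E$. Hence $u^z\mid_Y^{(1)}=u^z\mid_Y^{(2)}$ for all $z\in U$, and in particular $F_1=F_2$ on $U$ for every $\ph$. Since $\Omega$ is open and connected and $U$ is a nonempty open subset, the identity theorem for holomorphic functions then forces $F_1=F_2$ on all of $\Omega$. As $\ph\in\mathscr{D}(Y)$ was arbitrary, this gives $u^z\mid_Y^{(1)}=u^z\mid_Y^{(2)}$ for every $z\in\Omega$, and since $E_1,E_2$ were arbitrary, the whole family $\{u^z\mid_Y\}_{z\in\Omega}$ does not depend on $E$.

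There is no serious obstacle here; the only points that need care are the two inputs already available: that vertical restriction of a holomorphic family stays holomorphic (so that $F_1,F_2$ are genuinely holomorphic and the identity theorem applies), and that $U$ has nonempty interior, which is precisely the hypothesis. The rest is bookkeeping about tubular neighbourhoods, and nothing about the particular distributions $u^z$ beyond what is assumed enters the argument.
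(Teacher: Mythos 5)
Your proof is correct and is exactly the argument the paper leaves implicit: the paper states the proposition without proof, relying on the two remarks immediately preceding it (holomorphicity of the restricted family, and $E$-independence of restrictions of measures), which you combine with the identity theorem on the connected open set $\Omega$ just as intended. The only point worth noting is that since $\Omega\subseteq\C^n$ you are using the identity theorem in several complex variables, which does hold in the needed form (agreement on a nonempty open subset of a connected domain implies agreement everywhere).
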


\bibliographystyle{amsalpha}
\bibliography{ourref}

\end{document}